\journal{Journal of \LaTeX\ Templates}
\newtheorem{theorem}{\indent Theorem}
\newtheorem{lemma}{\indent Lemma}
\newtheorem{definition}{\indent Definition}
\newtheorem{proposition}{\indent Proposition}
\begin{document}

\begin{frontmatter}

\title{Towards the standard Majorana representations of 3-transposition groups}


\author[add]{Albert Gevorgyan\corref{mycorrespondingauthor}\footnote{Research supported by Imperial College London President's PhD Scholarships}}
\cortext[mycorrespondingauthor]{Corresponding author}
\ead{a.gevorgyan20@imperial.ac.uk}

\address[add]{Department of Mathematics, Imperial College, 180 Queen's Gt., London, SW7 2AZ, UK}

\begin{abstract}
    In this paper, we discuss $3$-transposition groups. In particular, we find sizes of maximal symmetric subgroups of the groups, which are in Fischer list. In addition, we build faithful representations of symmetric groups in orthogonal, symplectic, and unitary spaces, which fully verify the main result. Thus, we find candidate groups, for which one may be able to build standard Majorana representations.  
\end{abstract}

\begin{keyword}
3-transposition group, Fischer group, symmetric subgroup, Monster group, Majorana representation 
\MSC[2010] 05E15 \sep 20E07 
\end{keyword}

\end{frontmatter}

\section{Introduction}
Assume that $G$ is a group generated by a set of its involutions $T$, which is a union of conjugacy classes of $T$. In addition, let $W$ be a linear representation of $G$, which is also equipped with a positive definite form and an algebra product.
\begin{definition} Let the quintet
\begin{equation*}
    (G,T,W,\varphi,\psi)
\end{equation*}
be such that $\varphi:G \rightarrow Aut(W)$ is faithful, $\psi:T \rightarrow W$ is injective with $(t^{\psi})^{g^\psi}=(t^g)^{\psi}$ for all $g \in G, t \in T$. If the quintet satisfies the axioms (M1)-(M8) described in \cite{Iv10}, then it is called a Majorana representation of $G$.    
\end{definition}
From the definition one may deduce that if $T' \subseteq T$ and $H$ is the subgroup of $G$ generated by the involutions in $T'$, then $(H,T',W,\varphi|_H,\psi|_{T'})$ is a Majorana representation of $H$. \newline \newline
In \cite{Iv09} the author shows that if the group $G$ admits a Majorana representation, then the order of the product of any pair of involutions from $T$ is at most $6$. Later in \cite{Iv10} the authors build all the existing Majorana representations of the group $S_4$. Moreover, in \cite{Iv11} and \cite{Se11} authors build Majorana representations of the groups $A_5$, $A_6$, and $A_7$, where there the set $T$ of the involutions is the set of bitranspositions (the set of elements of $A_n$ of cycle type $2^2$). \newline \newline
In \cite{Fr16}, the authors discuss whether the group $A_{13}$ admits a standard Majorana representation. The authors assume that such a representation $V$ exists and under that assumption show that the form defined on $V$ is not positive definite. Thus, they deduce that if $A_n$ admits a standard Majorana representation, then $n \le 12$.
\begin{definition} (\cite{As96}) Let $n \ge 3$ be an integer. We call the pair $(G,D)$, where $G$ is a group and $D$ is a set of involutions of $G$ stable under conjugation, an $n$-transposition group, if the set $D$ generates $G$ and $o(d_1d_2) \le n$ for all $d_1,d_2 \in D$.
\end{definition}
Sometimes, when it is obvious from the context to which involutions $D$ we refer, by abuse of terminology, we say that "$G$ is an $n$-transpostion group". In \cite{Iv09} it is shown that the Monster group $M$ is generated by a conjugacy class of $2A$-involutions. It is also known that the order of the product of any two $2A$-involutions is at most $6$. Therefore, we obtain that $(M,2A)$ is a $6$-transposition group. Furthermore, there exists a Majorana representation $(M,2A,V_M,\varphi,\psi)$ of the Monster group $M$. \newline \newline
The main objects of interest of this paper are the $3$-transposition groups. Our main goal is to find groups $G$, which are candidates to admit Majorana representations.
\begin{theorem} \label{Nort} Suppose $(G,D)$ is a finite $3$-transposition group.
Let $S=\{p \cdot q | p \ne q \in D, [p,q]=1\}$ and $H$ be the subgroup generated by the elements in $S$. Then, $(H,S)$ is a  $6$-transposition group.
\end{theorem}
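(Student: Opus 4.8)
The plan is to check the three requirements for $(H,S)$ to be a $6$-transposition group: that $S$ consists of involutions, that $S$ is conjugation-stable, and that $o(s_1 s_2) \le 6$ for all $s_1, s_2 \in S$. The first two are immediate. If $p \ne q \in D$ commute then $(pq)^2 = p^2 q^2 = 1$ and $pq \ne 1$, so $pq$ is an involution; and for $h \in H \le G$ we have $(pq)^h = p^h q^h$ with $p^h, q^h \in D$ still distinct and commuting, so $S$ is stable under conjugation in $H$. All the work is therefore in the order bound. Since $o(s_1 s_2)$ depends only on the subgroup the two elements generate, I would fix $s_1 = pq$ and $s_2 = rs$ and pass to $K = \langle p,q,r,s\rangle$, a finite $3$-transposition group with distinguished set $D \cap K$ generated by at most four transpositions, and bound the order of the single element $pqrs$.

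The organizing device is the diagram $\Delta$ on $\{p,q,r,s\}$ whose edges join the pairs with product of order $3$ (the non-commuting pairs). Because $[p,q]=[r,s]=1$, no edge joins $p$ to $q$ or $r$ to $s$, so $\Delta$ is a subgraph of the complete bipartite graph on $\{p,q\}$ and $\{r,s\}$, i.e.\ of the $4$-cycle $p - r - q - s - p$; in particular $\Delta$ has no triangle. Since the generators are involutions whose pairwise products have exactly the orders prescribed by $\Delta$ ($2$ for a non-edge, $3$ for an edge), $K$ is a homomorphic image of the Coxeter group $W(\Delta)$. When $\Delta$ is not the full $4$-cycle it is a Coxeter diagram of finite (spherical) type, and $W(\Delta)$ is one of $2^4$, $S_3 \times 2^2$, $S_4 \times 2$, $S_3 \times S_3$, or $S_5$ according to the number and shape of the edges; in each case I would compute $o(pqrs)$ directly and read off the values $2,6,4,3,5$, so the bound holds and is already attained in the single-edge case $S_3 \times 2^2$.

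The hard part is the remaining case, where all four cross pairs are non-commuting and $\Delta$ is the full $4$-cycle. Here $W(\Delta)$ is the affine Weyl group $W(\tilde{A}_3) \cong \Lambda : S_4$ with $\Lambda \cong \mathbb{Z}^3$ the $A_3$ root lattice, which is \emph{infinite}, so its Coxeter element has infinite order and the diagram alone cannot bound $o(pqrs)$: the finiteness and the global $3$-transposition property of $G$ must be used. The key observation I would exploit is that all reflections of $W(\tilde{A}_3)$ are conjugate, hence map into the single class $D \cap K$; in particular the products of pairs of parallel reflections, which are exactly the translations generating $\Lambda$, become products of two elements of $D$ and so have order at most $3$. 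This forces the image $\overline{\Lambda}$ of $\Lambda$ in $K$ to be a quotient of $\Lambda/3\Lambda \cong (\mathbb{Z}/3)^3$, so $K$ is a quotient of $(\mathbb{Z}/3)^{3} : S_4$ with abelian normal subgroup $\overline{\Lambda}$ of exponent dividing $3$.

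Finally I would pin down $c = pqrs$ in this extension. Its image in the complement $K/\overline{\Lambda}$, a quotient of $S_4$, equals $(\bar p\,\bar q)(\bar r\,\bar s)$, a product of two double transpositions, so it lies in (the image of) the Klein four-subgroup of $S_4$ and has order at most $2$; hence $c^2 \in \overline{\Lambda}$ and $o(c^2)$ divides $3$, giving $c^6 = (c^2)^3 = 1$ and thus $o(pqrs) \mid 6$. Combined with the finite-type cases this yields the bound $6$ throughout. I expect the $4$-cycle to be the only genuinely delicate step, precisely because it is the unique configuration where the local Coxeter data fail to be of finite type and one is forced to invoke the ambient $3$-transposition condition to collapse the affine group; as an alternative one could instead quote Fischer's classification of $3$-transposition groups of small rank, but the affine-collapse argument keeps the proof self-contained, and the remaining cases are routine finite-group computations.
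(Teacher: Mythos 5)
Your reduction to the diagram on $\{p,q,r,s\}$, the finite-type cases (orders $2,6,4,3,5$), and the identification of the full $4$-cycle as the one genuinely hard case are all sound; note that the paper itself offers no proof at all for this theorem (it simply cites Norton), so a self-contained argument of this kind would be new content rather than a variant of the paper's. However, your affine case contains a genuine error. From ``the images of the translations $t_\alpha$ are products of two elements of $D$, hence have order at most $3$'' you infer that $\overline{\Lambda}$ is a quotient of $\Lambda/3\Lambda$. That inference fails: order at most $3$ includes order $2$, which occurs exactly when two parallel reflections map to \emph{distinct commuting} elements of $D$, and this really happens. Take $G=W(D_4)\cong (\mathbb{Z}/2)^3:S_4$ with $D$ its twelve reflections (a finite $3$-transposition group, since $D_4$ is simply laced), and $p=s_{e_1-e_2}$, $r=s_{e_2-e_3}$, $q=s_{e_3-e_4}$, $s=s_{e_2+e_3}$. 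These form your full $4$-cycle, but each translation maps to a sign change $\varepsilon_{ij}$ of order $2$ (e.g. $s_{e_2-e_3}s_{e_2+e_3}=\varepsilon_{23}$), so $\overline{\Lambda}\cong(\mathbb{Z}/2)^3$, and a direct computation gives $(pqrs)^2=-1$ (the central element), so $o(pqrs)=4$. Thus both your intermediate claim and your final conclusion ``$o(pqrs)\mid 6$'' are false in this configuration ($4\nmid 6$); only the inequality $o(pqrs)\le 6$, which is what the definition actually requires, survives.

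The gap is fixable, but it needs one more idea. Since $W(A_3)\cong S_4$ permutes the roots of $A_3$ transitively and $w\,t_\alpha\,w^{-1}=t_{w\alpha}$, all the translations $t_\alpha$ ($\alpha$ a root) are conjugate in $W(\widetilde{A}_3)$, hence their images in $K$ share a common order $m\in\{1,2,3\}$; as these images generate the abelian group $\overline{\Lambda}$, it has exponent dividing $m$. If $m\in\{1,3\}$ your argument runs as written and $c^6=1$; if $m=2$ then $c^2\in\overline{\Lambda}$ gives $c^4=1$. Either way $o(c)\le 6$. The conjugacy observation is not optional: without it you cannot exclude generators of $\overline{\Lambda}$ of mixed orders $2$ and $3$, which would only give exponent $6$ for $\overline{\Lambda}$ and hence only $c^{12}=1$, i.e. no bound better than $12$. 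Two smaller points: your four-point diagram analysis tacitly assumes $p,q,r,s$ are pairwise distinct, so the degenerate overlaps should be dispatched explicitly (e.g. $q=s$ gives $pq\cdot rq=pr$ of order at most $3$); and all you need from the conjugacy of reflections in $W(\widetilde{A}_3)$ is that every reflection is conjugate to a generator, which holds by definition of a reflection in a Coxeter group.
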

This theorem is proven in \cite{No17} and is an important starting point for this paper. In particular, it is known that some $3$-transposition groups are embedded into the Monster, so that the images of the elements of $S$ are $2A$-involutions. In addition, if such an embedding does not exist, there may still exist a Majorana representation for these groups.
\begin{theorem} (Fischer, \cite{As96,Fi69,Fi71})
Assume that $(G,D)$ is a $3$-transposition group so that $Z(G)$ is trivial and $[G,G]$ is simple. Then one of the following holds:
\begin{itemize}
    \item $G \cong S_n$, $D$ is the set of all transpositions;
    \item $G \cong Sp_n(2)$, $D$ is the set of transvections $t(v)$, where $v \ne 0$;
    \item $G \cong U_n(2)$, $D$ is the set of transvections $t(v)$, where $v \ne 0$ is singular;
    \item $G \cong O_n^{\epsilon}(2)$, $D$ is the set of transvections $t(v)$, where $v$ is non-singular;
    \item $G \cong PO_n^{\mu,\pi}(3)$, $D$ is the conjugacy class of reflections $t(v)$, where $t(v)=\pi$;
    \item $G$ is of type $M(22),M(23)$ or $M(24)$ and $D$ is a uniquely determined class of transpositions of $G$;
\end{itemize}
\end{theorem}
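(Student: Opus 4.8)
The plan is to translate the group-theoretic hypotheses into the combinatorial language of Fischer spaces and then classify those geometries. First I would form the \emph{Fischer space} $\Pi = (\mathcal{P}, \mathcal{L})$ whose point set is $\mathcal{P} = D$ and whose lines are the triples $\{d, e, ded\}$ arising from pairs $d \ne e \in D$ with $o(de) = 3$; since $\langle d, e \rangle \cong S_3$ in that case, each such line carries exactly three points, and two non-commuting transpositions determine a unique line. The axiom to verify is the \emph{triangle property}: for a point $p$ and a line $\ell$ with $p \notin \ell$, the point $p$ fails to commute with either exactly one or all three points of $\ell$; this follows from letting $p$ act by conjugation on the $S_3$ generated by $\ell$ and using the bound $o(pe) \le 3$ throughout. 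The non-commuting relation defines a graph on $\mathcal{P}$, and I would reduce to the case where this graph is connected: if $D = D_1 \sqcup D_2$ with every element of $D_1$ commuting with every element of $D_2$, then $G$ is a central product $\langle D_1 \rangle * \langle D_2 \rangle$, whose derived subgroup cannot be simple unless one factor is trivial.

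The second step is a local-to-global analysis at a transposition. Fixing $d \in D$, I would partition $D \setminus \{d\}$ into the set $A_d$ of transpositions commuting with $d$ and the set $B_d$ of those not commuting with $d$, and study the induced action of $C_G(d)$. The key structural facts are that $\langle A_d \rangle$ is again a $3$-transposition group — the \emph{residue} — and that conjugation by $d$ is a fixed-point-free involution on $B_d$, since $ded = e$ forces $[d,e] = 1$. This gives an inductive handle: the residue has strictly smaller rank, so I may assume its Fischer space already appears on the list. The heart of the argument is then to show that the local parameters — essentially the number of lines through a point and the intersection pattern of commuting versus non-commuting neighbourhoods — are rigid enough that the global geometry is forced, reading off whether the ambient form is defined over $\mathbb{F}_2$ or $\mathbb{F}_3$ and whether it is symplectic, unitary, or orthogonal from the residue together with the way residues are glued along commuting pairs.

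To pin the isomorphism type down, I would set up a recognition step. In the classical cases the Fischer space is realized inside a space over $\mathbb{F}_2$ or $\mathbb{F}_3$ carrying the relevant bilinear or Hermitian form, with each $d$ identified with a transvection or reflection $t(v)$; one recovers the module from the geometry, checks that $G$ acts faithfully preserving the form, and invokes the standard generation-by-transvections theorems to conclude $G \cong Sp_n(2)$, $U_n(2)$, $O_n^{\epsilon}(2)$, or $PO_n^{\mu,\pi}(3)$. The symmetric-group case corresponds to the triangular Fischer space on the $2$-subsets of an $n$-set, where the line through $(ij)$ and $(jk)$ is $\{(ij),(jk),(ik)\}$; this family also serves as the base of the induction, the residue of a transposition in $S_n$ being the transposition geometry of $S_{n-2}$.

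The main obstacle I anticipate is the treatment of the three exceptional geometries yielding $M(22)$, $M(23)$, and $M(24)$. These are precisely the connected Fischer spaces whose local residues match the classical parameters but which fail to embed into any of the classical forms; establishing that exactly three such geometries exist, and no more, requires a delicate finite analysis of the amalgam of point- and line-stabilizers, controlling the admissible extensions rank by rank and ruling out further sporadic configurations. Closely tied to this is the uniqueness of the class $D$ in the exceptional cases, which needs a separate argument that the chosen transposition class is the only one compatible with the geometry. I expect the classical identifications to be comparatively routine once the geometry is in hand, with essentially all of the difficulty concentrated in the exceptional recognition and the non-existence of additional configurations.
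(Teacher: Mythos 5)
The paper does not actually prove this statement: it is Fischer's classification theorem, imported wholesale from the cited literature (Fischer's papers and Aschbacher's book), so there is no internal proof to compare yours against. Judged on its own terms, your proposal correctly identifies the strategy that the literature does use --- pass to the Fischer space on $D$, reduce to the connected case via the central-product observation, analyse the residue $\langle A_d \rangle$ inductively, and finish with recognition theorems for transvection and reflection groups plus a separate treatment of $M(22)$, $M(23)$, $M(24)$. But as a proof it has two genuine gaps.

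First, a concrete error: your ``triangle property'' is stated wrongly. For $p \notin \ell$ with $\ell = \{d, e, ded\}$, the number of points of $\ell$ that fail to commute with $p$ is $0$, $2$ or $3$, not ``exactly one or all three'': if $p$ commutes with any two points of $\ell$ it commutes with the third (for instance, $p$ centralizing $e$ and $f = ede$ forces $p$ to centralize $efe = d$), which rules out a count of exactly one, while a count of zero is certainly possible. This dichotomy ($2$ versus $3$ non-commuting points) is not a cosmetic detail --- it is precisely the invariant that separates the characteristic-$2$ geometries (symplectic, unitary, orthogonal over $\mathbb{F}_2$) from the orthogonal geometries over $\mathbb{F}_3$, so getting it backwards would poison the entire induction. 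Second, and more fundamentally, the heart of the theorem is absent: the claim that the local parameters ``are rigid enough that the global geometry is forced,'' the recognition of the classical groups from their Fischer spaces, and above all the existence and uniqueness of exactly three exceptional geometries are all stated as expectations (``I anticipate,'' ``I expect''), not arguments. That residue-by-residue rigidity analysis, and the amalgam analysis that produces $M(22)$, $M(23)$, $M(24)$ while excluding any further sporadic configurations, constitute essentially the whole content of Fischer's theorem; they occupy Fischer's original papers and a large part of Aschbacher's book. A plan that defers exactly these steps is an accurate map of the proof, not a proof.
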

In this paper for each of above groups $(G,D)$ we find the maximal number $n$ , so that there exists a monomorphism $\varphi : S_n \rightarrow G$ with $\varphi(i j) \in D$, for all transpositions $(i j)$. If we have embedding $\varphi:S_n \rightarrow G$, then we may restrict it to $\varphi':A_n \rightarrow G$, and images of bitranspositions are the products of commuting transpositions of $D$, which form a set of $6$-transpositions by the theorem \ref{Nort}. Thus, if $G$ admits a standard Majorana representation, then so does $A_n$. Therefore, $n \le 12$. We will show that the classification of the candidate groups is the following:
\begin{theorem} \label{main} Assume that $G$ is a group from the Fischer's list such that $\varphi(G) \le 12$. Then one of the following holds:
\begin{itemize}
    \item $G \cong S_n$, with $n \le 12$;
    \item $G \cong Sp_n(2)$, with $n \le 10$;
    \item $G \cong U_n(2)$, with $n \le 11$;
    \item $G \cong {O_n}^{+,+}(3)$, with $n \le 11$;
    \item $G \cong {O_n}^{-,+}(3)$, with $n \le 12$;
    \item $G \cong {O_n}^{\epsilon}(2)$, with $n \le 11$;
    \item $G$ is one of the Fischer groups $M(22)$, $M(23)$ and $M(24)$.
\end{itemize}
\end{theorem}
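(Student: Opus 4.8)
The plan is to reduce Theorem~\ref{main} to the determination, for each family in Fischer's list, of the quantity $\varphi(G)$ --- the largest $m$ for which $S_m$ embeds in $G$ with transpositions mapped into $D$ --- and then to read off exactly the parameter ranges for which $\varphi(G)\le 12$. The case $G\cong S_n$ is immediate, since the identity gives $\varphi(S_n)=n$ (no larger symmetric group can embed, by order), so $\varphi(S_n)\le 12$ iff $n\le 12$. For the three sporadic groups $M(22),M(23),M(24)$ there is nothing to parametrise: each is a fixed finite group, so $\varphi$ is a single integer, and it suffices to exhibit its value via the known maximal-subgroup structure of the Fischer groups and to check that it does not exceed $12$.

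Thus the heart of the argument is the four infinite families over $\mathbb{F}_2,\mathbb{F}_3,\mathbb{F}_4$. For each I would compute $\varphi(G)$ in two halves. For the lower bound I would construct the embedding explicitly from the permutation module: let $S_m$ act on the coordinate space $\mathbb{F}_q^m$ preserving the standard form, so that each transposition $(ij)$ acts on the sum-zero subspace $V_0=\{x:\sum x_i=0\}$ as the transvection (resp.\ reflection) in direction $e_i-e_j$. When the all-ones vector $\mathbf{1}$ lies in the radical of the form on $V_0$ --- which happens exactly according to the residue of $m$ modulo the characteristic $p$ --- one passes to the quotient $V_0/\langle\mathbf{1}\rangle$, dropping the dimension by a further one. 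This yields a faithful map $S_m\hookrightarrow G$ with transpositions in $D$, into a classical group of dimension $m-1$ or $m-2$, and one then checks that the geometric invariants of the resulting form (the type $\epsilon$ or $\mu$, and the class $\pi$ of the reflections) agree with the target group. Matching these invariants, together with the arithmetic of $m\bmod p$, is what separates the thresholds $10,11,11,11,12$ across the families and, for the orthogonal groups over $\mathbb{F}_3$, distinguishes the $(+,+)$ and $(-,+)$ cases.

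For the upper bound I would argue that no larger symmetric group admits such an embedding. The key input is representation-theoretic: a faithful $\mathbb{F}_q S_m$-module on which every transposition acts as an involution of transvection/reflection type (fixing a hyperplane and acting trivially on the residual quotient) must contain a copy of the reduction modulo $p$ of the standard Specht module $S^{(m-1,1)}$; hence its dimension is at least $m-1$, or $m-2$ precisely in the modular situation where the trivial constituent splits off. Combined with the invariant-matching of the previous step, this forces $m\le\varphi(G)$ with the stated value, so that $S_{m+1}$ cannot embed and the threshold is sharp.

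I expect the main obstacle to be the upper-bound/maximality step, and within it the bookkeeping of the form-theoretic invariants in small characteristic. Proving that the minimal faithful module with transpositions acting as transvections is forced to be the reduced standard module --- rather than some other composition of Specht factors --- requires a careful analysis of the submodule structure of the permutation module $\mathbb{F}_q^m$ for $p\in\{2,3\}$, where the trivial module can appear with multiplicity and where the singularity conditions defining $D$ (non-singular versus singular transvections, the reflection class $\pi=t(v)$) interact delicately with the discriminant and the Witt index. Getting these congruences exactly right is precisely what pins the bounds at $n\le 10$ for $Sp_n(2)$, $n\le 11$ for $U_n(2)$, $O_n^\epsilon(2)$ and $O_n^{+,+}(3)$, and $n\le 12$ for $O_n^{-,+}(3)$.
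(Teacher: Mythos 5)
Your lower-bound half is essentially the construction the paper itself gives (its Section 5 builds exactly these permutation-module representations: restrict to the sum-zero subspace, quotient by the all-ones vector when it lies in the radical, and compute Gram determinants to pin down the type), so that part is sound. The paper's actual proof of the theorem, however, uses none of your representation theory: it reduces $\varphi(G)$ to the length of a maximal \emph{chain} of transpositions (the ``big lemma''), shows via Witt's lemma that any linearly independent chain extends to a maximal one (the ``main lemma''), and then determines maximal chain lengths by explicit, elementary linear algebra in each family; the sporadic groups $M(22)$, $M(23)$, $M(24)$ are handled by chains inside transposition centralizers (e.g. $C(x)/\langle x\rangle \cong U_6(2)$ for $M(22)$). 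Your upper-bound route is genuinely different, and its dimension input --- a faithful $\mathbb{F}_q S_m$-module has dimension at least $m-1$, or $m-2$ when $p$ divides $m$ --- is a true but nontrivial theorem (James/Wagner on minimal dimensions of nontrivial modular representations of $S_m$) that must be cited or proved; your ``must contain the reduced Specht module $S^{(m-1,1)}$'' sketch is not a proof of it. Granting that input, dimension counting alone does settle $Sp_n(2)$ and $U_n(2)$, which is a tidy shortcut the paper does not use.

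The genuine gap is the step you yourself flag and then defer: the invariant matching, which for the orthogonal families is not bookkeeping but the entire content of the theorem. Concretely, for $O_n^{+,+}(3)$ with $n \equiv 1 \pmod 3$ the claim is $\varphi=n$, i.e.\ there is no embedding of $S_{n+1}$ sending transpositions to reflections of class $+$; but $n+1$ is not divisible by $3$, so the minimal faithful dimension of $S_{n+1}$ in characteristic $3$ is exactly $n$, and your dimension bound permits the embedding --- indeed $S_{n+1}$ \emph{does} embed this way into $O_n^{-,+}(3)$, so only the discriminant can rule out the $(+,+)$ case. Excluding it requires showing that any such embedding forces the natural module to be the standard $S_{n+1}$-module, that the invariant symmetric form on it is unique up to scalar (immediate only when the module is irreducible; in the cases where $p$ divides the number of points the module is non-semisimple and this needs a separate argument), and then computing the pair (discriminant, reflection class) up to the simultaneous flip $(\mu,\pi)\mapsto((-1)^n\mu,-\pi)$; the mod-$8$ cases of $O_n^{\epsilon}(2)$ require the analogous Arf-invariant computation. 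None of this is carried out in your proposal, whereas the paper's chain method makes these congruence conditions fall out of finite, explicit linear algebra. Finally, your treatment of $M(22)$, $M(23)$, $M(24)$ (``exhibit the value via known maximal-subgroup structure'') is not an argument; strictly speaking the literal statement requires nothing for them, since they appear in the list unconditionally, but the values $\varphi(M(22))=10$ and $\varphi(M(23))=\varphi(M(24))=12$ that make them genuine candidates are obtained in the paper by the centralizer-chain analysis, and some substitute for it would be needed in any complete write-up.
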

In this paper, we adopt an elementary combinatorial approach for finding the maximal size of symmetric subgroups of these groups. We do not consider any character tables and all the calculations are carried out by hand. In addition, we build geometrical representations of these groups, which verify the main result for the classical groups.
\section{Notation}
In this paper we use the following notation for the classical groups, which agrees with the notation of \cite{As96}:
\begin{itemize}
    \item For an even integer $n$ we write $Sp_n(2)$ for the isometry group of the $n$-dimensional symplectic space $V$ over the $2$-element field. This group is generated by transvenctions $t(v):w \rightarrow w+(v,w)v$, where $v$ is a non-zero vector.
    \item For an integer $n$ we write $U_n(2)$ for the isometry group of the $n$-dimensional unitary space $V$ over the $4$-element field. This group is generated by transvenctions $t(v):w \rightarrow w+(v,w)v$, where $v$ is a non-zero singular vector.
    \item For an integer $n$ we write $O^{\epsilon}_n(2)$ ($\epsilon \in \{+,-\}$) for the isometry group of the $n$-dimensional orthogonal space $V$ of type $\epsilon$ over the $2$-element field. This group is generated by transvenctions $t(v):w \rightarrow w+(v,w)v$, where $v$ is a non-singular vector.
    \item For an integer $n$ we write $O^{\mu,\pi}_n(3)$ ($\mu, \pi \in \{+,-\}$) for the subgroup of the isometry group of the $n$-dimensional orthogonal space $V$ over the $3$-element field, a Gram matrix of a basis of which has determinant equal to $\mu$,  generated by the reflections $t(v)$ for vectors with $Q(v)=\pi$.
\end{itemize}
In each case $t(v)$ and $t(u)$ commute if and only if $(v,u)=0$. \newline \newline
It also known that, when $n$ is odd, then $O_n^{+}(2) \cong O_n^{-}(2) \cong Sp_{n-1}(2)$. Furthermore, $O_n^{\mu,\pi}(3) \cong O_n^{\mu,-\pi}(3)$, if $n$ is even, and $O_n^{\mu,\pi}(3) \cong O_n^{-\mu,-\pi}(3)$, if $n$ is odd. \newline \newline
In what follows, $\varphi(G)$ denotes the maximal number $n$ , so that there exists a monomorphism $\varphi : S_n \rightarrow G$ with $\varphi(i j) \in D$, for all transpositions $(i j)$.
\section{Auxiliary results}
In this section, we prove several lemmas, which later help us to prove the results of the main part.
\begin{definition} Let $(G,D)$ be a $3$-transposition group. We define a chain of transpositions as a sequence of pairwise distinct transpositions $g_1,...,g_m$, for which $[g_i,g_j]=1$ (with $i \ne j$) if and only if $|i-j| \ne 1$. We call $m$ the length of the chain. 
\end{definition}
Recall that, when $G$ is an orthogonal, symplectic or unitary group, then the transpositions are of form $t(v)$ for some vectors $v$ of the corresponding vector space, and the transpositions commute if and only if the corresponding vectors are orthogonal. Thus, while working with such groups, if the transpositions $g_1=t(v_1),...,g_m=t(v_m)$ form a chain, we say that the vectors $v_1,...,v_m$ form a chain of length $m$. Note that if $v_1,...,v_m$ form a chain, then no pair of vectors may be collinear. 
\begin{lemma} (big) Let $(G,D)$ be a $3$-transposition group. Let $m$ be the length of the maximal chain of transpositions of $G$. Then, $\varphi(G)=m+1$.
\end{lemma}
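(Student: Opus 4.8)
The plan is to establish the two inequalities $\varphi(G)\le m+1$ and $\varphi(G)\ge m+1$ separately, in each case using the Coxeter presentation of the symmetric group as the bridge between embeddings $S_n\hookrightarrow G$ and chains of transpositions.

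For the upper bound, I would start from a monomorphism $\varphi:S_n\rightarrow G$ with $\varphi(ij)\in D$ for every transposition, and look at the adjacent transpositions $s_i=(i,i+1)$ for $1\le i\le n-1$. Putting $g_i=\varphi(s_i)$, injectivity makes the $g_i$ pairwise distinct, and since an injective homomorphism reflects commuting, $[g_i,g_j]=1$ holds exactly when $[s_i,s_j]=1$, i.e. exactly when $s_i$ and $s_j$ are disjoint, i.e. when $|i-j|\ne 1$. Hence $g_1,\dots,g_{n-1}$ is a chain of length $n-1$, so $n-1\le m$. As this bounds every admissible $n$, we obtain $\varphi(G)\le m+1$.

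For the lower bound I would build an embedding out of a maximal chain $g_1,\dots,g_m$. The key verification is that these elements satisfy the defining relations of $S_{m+1}$ on generators $s_1,\dots,s_m$: each $g_i$ is an involution since $g_i\in D$; for $|i-j|\ge 2$ the chain condition gives $[g_i,g_j]=1$, and as $g_i\ne g_j$ the product $g_ig_j$ has order exactly $2$; for $|i-j|=1$ the elements do not commute, so $o(g_ig_j)\notin\{1,2\}$, while the $3$-transposition hypothesis forces $o(g_ig_j)\le 3$, whence $o(g_ig_j)=3$. These are precisely the relations $s_i^2=1$, $(s_is_{i+1})^3=1$, $(s_is_j)^2=1$ for $|i-j|\ge 2$, so by von Dyck's theorem there is a homomorphism $\varphi:S_{m+1}\rightarrow G$ with $\varphi(s_i)=g_i$. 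Since every transposition of $S_{m+1}$ is conjugate to some $s_i$, its image is $G$-conjugate to $g_i\in D$, and as $D$ is stable under conjugation we get $\varphi(ij)\in D$ for all transpositions.

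The main obstacle is injectivity, where the exceptional normal subgroups in low degree must be ruled out. I would analyse $\ker\varphi\trianglelefteq S_{m+1}$: it is proper because $\varphi(s_1)=g_1\ne 1$. For $m+1\ge 5$ the only nontrivial proper normal subgroup is $A_{m+1}$, and $\ker\varphi\supseteq A_{m+1}$ would make the image abelian of order at most $2$, contradicting $[g_1,g_2]\ne 1$; hence the kernel is trivial. The remaining small cases I would check by hand, the only genuine danger being the normal Klein four subgroup of $S_4$, which however contains $(12)(34)\mapsto g_1g_3\ne 1$ and so cannot lie in the kernel. Thus $\varphi$ is an embedding, giving $\varphi(G)\ge m+1$, and combining with the upper bound yields $\varphi(G)=m+1$.
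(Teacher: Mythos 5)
Your proof is correct and follows essentially the same route as the paper: the upper bound by observing that the images of the adjacent transpositions $(i,i+1)$ form a chain, and the lower bound via the type-$A_m$ Coxeter presentation of $S_{m+1}$ together with the classification of the normal subgroups of $S_{m+1}$ to force triviality of the kernel. If anything, your write-up is more careful than the paper's at two points: you explicitly derive the order-$3$ relations from the $3$-transposition hypothesis, and your disposal of the Klein four-group case (via $(12)(34)\mapsto g_1g_3\ne 1$, using distinctness of commuting chain members) is sound, whereas the paper's claim that a nontrivial quotient ``does not contain $m$ involutions'' is shaky precisely there, since $S_4/V_4\cong S_3$ does contain $m=3$ involutions.
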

\begin{proof}
First, let us note that if there exists a monomorphism $\psi : S_n \rightarrow G$, then the transpositions $d_1=\psi(1, 2)$, $d_2=\psi(2, 3)$, ..., $d_{n-1}=\psi(n-1, n)$ form a chain. Hence, $m \ge \varphi(G)-1$, so $\varphi(G) \le m+1$. \newline \newline
On the other hand, suppose there exist transpositions $d_1,...,d_m$ forming a chain. Then, if we create the non-commuting graph of $\{ d_1, ..., d_m \}$, it is a Coxeter graph of form $A_m$. It is well-known fact that the corresponding Coxeter group is isomorphic to $S_{m+1}$. Thus, there exists a homomorphism $\psi:S_{m+1} \rightarrow <d_1,...,d_m>$ with $\psi(i, i+1)=d_i$. \newline \newline
Note that $ker(f) \unlhd S_{m+1}$. Moreover, the normal subgroups of $S_{m+1}$ are  $\{ 1 \}$, $A_{m+1}$ and $S_{m+1}$ and possibly the Klein $4$-group (if $m+1=4$). In addition, if $ker(f) \ne \{1\}$, then the factor group $S_{m+1} / ker(f)$ does not contain $m$ involutions. Therefore, $ker(f)$ is trivial and $f$ is an isomorphism. Thus $\varphi(G) \ge m+1$.
\end{proof}
In addition, we use the following classical result, which can be found in \cite{As96}.
\begin{lemma} (Witt):
Let $V$ be an orthogonal, symplectic or unitary space. Let $U$ and $W$ be subspaces of $V$ and suppose that $\alpha : U \rightarrow W$ is an isometry of subspaces. Then, $\alpha$ extends to an isometry of $V$.
\end{lemma}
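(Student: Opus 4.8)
The statement to be proved is the classical \emph{Witt extension theorem}, so I would follow the standard inductive scheme, arguing by induction on $k=\dim U$. The case $k=0$ is trivial. For $k=1$ write $U=\langle u\rangle$ and $W=\langle w\rangle$ with $w=\alpha(u)$; since $\alpha$ is an isometry, $u$ and $w$ carry the same value of the defining form ($Q(u)=Q(w)$ in the orthogonal case, and correspondingly in the symplectic and unitary cases). The goal is a global isometry carrying $u$ to $w$, and I would produce one as a reflection (or transvection): if $u-w$ is non-singular, the reflection in the hyperplane $(u-w)^{\perp}$ sends $u\mapsto w$, which a short computation using $Q(u)=Q(w)$ confirms; if $u-w$ is singular, one instead uses a two-step argument, first reflecting $u$ onto an auxiliary vector $z$ and then $z$ onto $w$, with $z$ chosen so that $u-z$ and $z-w$ are both non-singular.

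For the inductive step with $k\ge 2$ I would first treat the case in which the form restricted to $U$ is non-degenerate. Choose an orthogonal decomposition $U=U_{0}\perp\langle u\rangle$ with $\dim U_{0}=k-1$, which exists when $U$ is non-degenerate. By the inductive hypothesis $\alpha|_{U_{0}}$ extends to a global isometry $\beta$; replacing $\alpha$ by $\beta^{-1}\alpha$ we may assume $\alpha$ fixes $U_{0}$ pointwise. Then $u$ and $w=\alpha(u)$ both lie in the non-degenerate subspace $U_{0}^{\perp}$ (for $x\in U_{0}$ one has $(w,x)=(\alpha u,\alpha x)=(u,x)=0$), so the one-dimensional case applied inside $U_{0}^{\perp}$ supplies an isometry of $U_{0}^{\perp}$ taking $u$ to $w$; extending it by the identity on $U_{0}$ gives the required extension of $\alpha$ to $V=U_{0}\perp U_{0}^{\perp}$.

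The remaining case is when $U$ is degenerate, i.e. $R:=U\cap U^{\perp}\ne 0$, and here the plan is to enlarge $U$ to a non-degenerate subspace and reduce to the previous case. Write $U=R\oplus U_{1}$ with $U_{1}$ non-degenerate. Since $V$ is non-degenerate and each $z\in R$ is orthogonal to all of $U$, any $y$ with $(z,y)\ne 0$ automatically lies outside $U$; pairing off a basis of $R$ in this way yields a totally isotropic $R'$ with $R\oplus R'$ a hyperbolic, hence non-degenerate, space orthogonal to $U_{1}$. Set $\tilde U=U_{1}\perp(R\oplus R')\supseteq U$, which is non-degenerate. Because $\alpha$ carries $R=\operatorname{rad}(U)$ onto $\operatorname{rad}(W)$, the same construction on the $W$-side gives $\tilde W$, and $\alpha$ extends to an isometry $\tilde\alpha:\tilde U\to\tilde W$ by sending the chosen hyperbolic partners to hyperbolic partners. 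The non-degenerate case now applies to $\tilde\alpha$ and extends it, and hence $\alpha$, to all of $V$.

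I expect the main obstacle to be the base case together with the characteristic-two orthogonal setting. In a symplectic space the form is alternating, so there are no anisotropic vectors and reflections are unavailable; the one-dimensional step must instead be carried out with symplectic transvections. In an orthogonal space over a field of characteristic two one must track the quadratic form $Q$ rather than only its polar bilinear form, and this is precisely where orthogonal splittings and the non-singularity of $u-w$ can fail and force the two-step replacement above. Verifying that the enlargement $\tilde U$ and the extension $\tilde\alpha$ genuinely respect $Q$, not merely the bilinear form, is the delicate point; once the one-dimensional case is secured uniformly across the three families of spaces, the induction goes through without further difficulty. As the result is classical, I would in practice defer the characteristic-two bookkeeping to the account in \cite{As96}.
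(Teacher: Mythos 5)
The paper does not actually prove this lemma: it is stated as a classical result and referred directly to Aschbacher's book \cite{As96}, so there is no internal proof to compare yours against. Your sketch is the standard Witt-extension induction — base case via reflections/transvections, inductive step by splitting off a non-degenerate line, and reduction of the degenerate case by enlarging the radical with hyperbolic partners — which is essentially the argument in the cited source, so in substance you have reconstructed the proof the paper delegates to the literature. As an outline it is sound, and you correctly identify the two delicate points: the symplectic case (no anisotropic vectors, so transvections must replace reflections) and the characteristic-two orthogonal case (one must preserve the quadratic form $Q$, not merely its polar form, and $u-w$ may be singular). One step that deserves more care than your wording suggests is the radical-enlargement: the vector $y$ chosen with $(z,y)\ne 0$ must additionally be corrected so that it is orthogonal to $U_{1}$ and to the other hyperbolic pairs, and made isotropic (which in characteristic two requires adjusting by $Q$, not the bilinear form); likewise one must check that $\alpha$ really does map $\operatorname{rad}(U)$ onto $\operatorname{rad}(W)$ before matching partners. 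These are routine but non-trivial verifications, and deferring them to \cite{As96}, as you propose, is exactly what the paper itself does for the entire lemma.
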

\begin{lemma} (small):
Let $V$ be an orthogonal, symplectic or unitary space. Let the vectors $v_1,...,v_{n+1} \in V$ form a chain. Then the vectors $v_1,...,v_n$ are linearly independent.
\end{lemma}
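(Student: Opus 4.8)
The plan is to argue directly that the only linear relation among $v_1, \ldots, v_n$ is the trivial one, by exploiting the near-diagonal shape of the Gram matrix of a chain. First I would translate the chain condition into the language of the form: since $t(v_i)$ and $t(v_j)$ commute precisely when $(v_i, v_j) = 0$, the hypothesis that $v_1, \ldots, v_{n+1}$ form a chain says exactly that $(v_i, v_{i+1}) \neq 0$ for every $i$, while $(v_i, v_j) = 0$ whenever $|i - j| \geq 2$. Crucially, this only constrains the off-diagonal entries of the Gram matrix; I will never need the self-products $(v_i, v_i)$, which is what lets a single argument run simultaneously in the orthogonal, symplectic and unitary cases (in the symplectic case those diagonal entries are in fact forced to vanish).

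Suppose then that $\sum_{i=1}^{n} c_i v_i = 0$. I would peel off the coefficients from the top down. Pairing this relation with $v_{n+1}$ and using $(v_i, v_{n+1}) = 0$ for all $i \leq n-1$ leaves only $c_n (v_n, v_{n+1}) = 0$; since $(v_n, v_{n+1}) \neq 0$ and we are working over a field, this forces $c_n = 0$. (In the unitary case the surviving coefficient may carry a conjugate, but $\overline{c_n} = 0$ is equivalent to $c_n = 0$, so nothing changes.) Then I would induct downward: assuming $c_{k+1} = \cdots = c_n = 0$, the relation reads $\sum_{i=1}^{k} c_i v_i = 0$, and pairing with $v_{k+1}$ kills every term except $c_k (v_k, v_{k+1})$, whence $c_k = 0$. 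Running this from $k = n-1$ down to $k = 1$ shows all coefficients vanish, so $v_1, \ldots, v_n$ are linearly independent.

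I expect the only real point to watch is the role of the extra vector $v_{n+1}$: it is precisely the test vector needed to eliminate the top coefficient $c_n$, because it is orthogonal to $v_1, \ldots, v_{n-1}$ but not to $v_n$. One cannot instead start the induction by pairing with $v_n$ itself, since that would involve the self-product $(v_n, v_n)$, which may be zero. This is exactly why a chain of length $n+1$ is needed to conclude independence of $n$ vectors, and it also explains why the lemma stops short of asserting independence of the full family $v_1, \ldots, v_{n+1}$.
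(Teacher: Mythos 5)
Your proof is correct and takes essentially the same approach as the paper: the paper argues by contradiction, letting $m$ be the maximal index with $c_m \neq 0$ and pairing the relation with $v_{m+1}$ to isolate the nonzero term $c_m(v_m,v_{m+1})$, which is exactly the key step in your downward induction. The two arguments differ only in packaging (contradiction via a maximal nonzero coefficient versus explicit descending induction), not in substance.
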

\begin{proof} Suppose, for a contradiction, there exists a non-trivial linear combination $c_1v_1+...+c_nv_n=0$. Let $m$ be the maximal number with $c_m \ne 0$. Then, $c_1v_1+...+c_mv_m=0$. Therefore, $0=(0,v_{m+1})=(c_1v_1+...+c_mv_m,v_{m+1})=c_1(v_1,v_{m+1})+...+c_m(v_m,v_{m+1})=c_m (v_m,v_{m+1}) \ne 0$, which is a contradiction.
\end{proof}
\begin{lemma} (main):
Let $V$ be an orthogonal, symplectic or unitary space. Let $v_1,...,v_m$ be linearly independent vectors in $V$, which form a chain. Then, $v_1,...,v_m$ can be continued to a maximal chain of $V$.
\end{lemma}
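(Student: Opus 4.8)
The plan is to deduce everything from Witt's Lemma, after establishing that up to rescaling the individual vectors there is essentially only one chain of each length. Concretely, the key sub-claim is that the isometry group of $V$ acts transitively on chains of a fixed length; granting this, I would take any chain of maximum possible length, carry its initial segment onto $v_1,\dots,v_m$ by an isometry of $V$, and read off the desired extension from the images of the remaining vectors.

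To prove the sub-claim, I would first record that a chain is encoded by the Gram matrix of its vectors, which by definition is tridiagonal: $(v_i,v_j)=0$ for $|i-j|\ge 2$, while $(v_i,v_{i+1})\ne 0$. The diagonal entries — and, in characteristic $2$, the quadratic values $Q(v_i)$ — are pinned down by the common type of the vectors: they vanish in the symplectic and unitary cases, while in the orthogonal cases all $v_i$ are non-singular with $Q(v_i)=1$ over $\mathbb{F}_2$, respectively $Q(v_i)=\pi$ over $\mathbb{F}_3$. The off-diagonal entries I would normalize to a fixed value by rescaling $v_i\mapsto\lambda_i v_i$: over $\mathbb{F}_3$ using signs $\lambda_i=\pm 1$, over $\mathbb{F}_4$ by solving $\lambda_i\overline{\lambda_{i+1}}\,(v_i,v_{i+1})=1$ recursively, and over $\mathbb{F}_2$ by doing nothing since $1$ is the only unit. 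The crucial point — and the main obstacle — is that these rescalings preserve the vector type: a nonzero vector stays nonzero, a singular vector stays singular since $(\lambda v,\lambda v)=\lambda\overline{\lambda}\,(v,v)$, and $Q(\lambda v)=\lambda^2 Q(v)=Q(v)$ for $\lambda=\pm 1$; one must also keep in mind that in characteristic $2$ an isometry of subspaces has to match the quadratic form and not merely the bilinear form, which forces me to compare the $Q(v_i)$ directly. Granting this, after rescaling any two length-$m$ chains have identical Gram data and identical $Q$-values on the basis vectors, so the coordinatewise identification of their spans is an isometry of subspaces.

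With the sub-claim in hand the extension is immediate. Since $V$ is finite-dimensional over a finite field and Lemma (small) bounds chain length by $\dim V+1$, a chain $u_1,\dots,u_M$ of maximum length $M$ exists; if $M=m$ then $v_1,\dots,v_m$ is already maximal, so I may assume $m<M$, whence $u_1,\dots,u_m$ are linearly independent by Lemma (small) and form a length-$m$ chain. By the previous paragraph there is an isometry of subspaces carrying each $u_i$ to $\lambda_i v_i$, and by Witt's Lemma it extends to an isometry $\alpha$ of all of $V$. Then $\alpha(u_1),\dots,\alpha(u_M)$ is again a chain of length $M$ with $\alpha(u_i)=\lambda_i v_i$ for $i\le m$, and the tail vectors $\alpha(u_j)$ automatically have the correct type because $\alpha$ is an isometry. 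Finally I would invoke scaling-invariance of the orthogonality pattern: since $(v_i,\alpha(u_j))$ and $(\lambda_i v_i,\alpha(u_j))$ vanish simultaneously, replacing each $\lambda_i v_i$ by $v_i$ leaves $v_1,\dots,v_m,\alpha(u_{m+1}),\dots,\alpha(u_M)$ a chain of the maximal length $M$ that continues the original one, which is exactly what is required.
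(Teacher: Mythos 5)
Your proposal is correct and follows essentially the same route as the paper: take a chain of maximum length, rescale vectors so that the Gram data of its initial segment matches that of $v_1,\dots,v_m$, extend the resulting isometry of spans to all of $V$ by Witt's Lemma, and transport the tail. The only difference is presentational — you normalize both chains to a canonical Gram matrix (phrased as transitivity on chains of fixed length) and are more explicit about the diagonal entries and the quadratic-form condition in characteristic $2$, points the paper leaves implicit in the phrase ``by the choice of $w_1',\dots,w_m'$, $L$ is an isometry.''
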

\begin{proof} Let $w_1,...,w_n$ be a maximal chain of $V$. If $n=m$, then the statement is true. Otherwise, $m \le n-1$, so by the small lemma the vectors $w_1,...,w_m$ are linearly independent. Furthermore, let $w_1'=w_1$. Then, we can substitute $w_2$ with $w_2'=c_2w_2$ for some constant $c_2 \ne 0$ so that $(v_1,v_2)=(w_1',w_2')$. Similarly, we can substitute $w_3,...,w_m$ with their scalar multiples so that $(v_i,v_{i+1})=(w_i',w_{i+1}')$ for $i=1,...,m-1$. Let $U=span(v_1,...,v_m)$ and $W=span(w_1,...,w_m)$. Then, $v_1,...,v_m$ form a basis for $U$ and $w_1',...,w_m'$ form a basis for $W$. Hence, we can define a linear map $L:W \rightarrow U$ with $L(w_i')=v_i$ for $i=1,...,m$. Then, by the choice of $w'_1,...,w'_m$, $L$ is an isometry. Therefore, by the Witt's lemma, $L$ can be extended to an isometry $T$ of $V$. Denote, $v_{m+1}=T(w_{m+1})$, ..., $v_n=T(w_n)$. From the fact that $T$ is isometry, one may deduce that $v_1=T(w_1')$, ..., $v_n=T(w_n)$ is a chain as well. 
\end{proof}
It is well known that if $V$ is a symplectic, unitary or orthogonal space, $v \in V$ and $c$ is a constant, then $t(v)=t(cv)$. Therefore, if we have a chain of vectors $v_1,...,v_n$ and multiply some of these vectors by non-zero constants, then it will also be a chain.  
\section{Main results}
Now, after stating and proving important lemmas in the previous sections, we are ready for proving the main results of this paper. We compute the value of $\varphi(G)$ for each group in Fischer's list.
\begin{proposition} If $G \cong S_n$, then $\varphi(G)=n$.
\end{proposition}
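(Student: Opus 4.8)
The plan is to establish the two inequalities $\varphi(S_n) \ge n$ and $\varphi(S_n) \le n$ separately. For the lower bound I would simply exhibit an explicit admissible embedding: the identity map $\mathrm{id}\colon S_n \to S_n$ is a monomorphism, and it carries every transposition $(ij)$ to itself, which lies in $D$ (the set of all transpositions of $G = S_n$). Hence $S_n$ embeds into $G$ in the required way, giving $\varphi(S_n) \ge n$ directly from the definition of $\varphi$.

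For the upper bound I see two routes. The quickest is a cardinality argument: any monomorphism $\psi\colon S_k \to S_n$ forces $|S_k| \le |S_n|$, that is $k! \le n!$, hence $k \le n$; so no admissible embedding of $S_{n+1}$ can exist and $\varphi(S_n) \le n$. The route more in keeping with the framework developed here is to invoke the big lemma, which reduces the computation to finding the length $m$ of a maximal chain of transpositions via $\varphi(S_n) = m+1$. I would first record that in $S_n$ two distinct transpositions commute precisely when they are disjoint and fail to commute precisely when they share exactly one point. Writing a chain as $g_1 = (v_0\, v_1), g_2, \dots, g_m$, the requirement that the non-commuting graph be the path $A_m$ then forces consecutive transpositions to share a point and non-consecutive ones to be disjoint. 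An easy induction yields $g_i = (v_{i-1}\, v_i)$ for points $v_0, v_1, \dots, v_m$: each $g_{i+1}$ must be disjoint from $g_{i-1} = (v_{i-2}\, v_{i-1})$ yet share a point with $g_i = (v_{i-1}\, v_i)$, which pins the shared point down as $v_i$ and forces the new point $v_{i+1}$ to avoid every previously used vertex. Thus the $m+1$ points lie in $\{1, \dots, n\}$ and are pairwise distinct, so $m + 1 \le n$, while the explicit chain $(1\,2), (2\,3), \dots, (n-1,\, n)$ shows the bound $m = n-1$ is attained. Either way $\varphi(S_n) \le n$, and combining with the lower bound gives $\varphi(S_n) = n$.

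There is no serious obstacle here; the computation is essentially routine. The only point demanding a little care, should one present the chain argument, is verifying that the vertices $v_0, \dots, v_m$ really are pairwise distinct, which is exactly what the induction above guarantees. Since the cardinality argument bypasses this entirely, I would most likely take it as the principal line for the upper bound and mention the chain count as a consistency check confirming that the maximal chain length in $S_n$ equals $n-1$, in agreement with the big lemma.
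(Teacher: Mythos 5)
Your proposal is correct, and in fact it supplies more than the paper does: the paper dismisses this proposition with the single sentence ``The statement is elementary'' and gives no argument at all, so there is no proof of record to compare against. Both of your routes are sound. The lower bound via the identity embedding is exactly right, and the cardinality argument for the upper bound ($k! \le n!$ forces $k \le n$, so no monomorphism $S_{n+1} \to S_n$ exists at all, let alone one respecting $D$) is the most economical way to finish. Your alternative chain argument is also correct: in $S_n$ two distinct transpositions commute precisely when they are disjoint, and your induction showing that a chain $g_1, \dots, g_m$ must have the form $(v_0\,v_1), (v_1\,v_2), \dots, (v_{m-1}\,v_m)$ with $v_0, \dots, v_m$ pairwise distinct is the right way to see that the maximal chain length is $n-1$; combined with the explicit chain $(1\,2), (2\,3), \dots, (n-1\,n)$ and the big lemma, this recovers $\varphi(S_n) = n$. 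The chain route is the one most in the spirit of the paper, since it is the same mechanism used for all the classical groups later (exhibit a chain, show it cannot be extended, apply the big lemma), and it doubles as a sanity check on the big lemma itself; the cardinality route is shorter but specific to the symmetric group. Either, or both, would serve as a legitimate replacement for the paper's omitted proof.
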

The statement is elementary.
\begin{proposition} If $G \cong Sp_{2n}(2)$ and $n \ge 2$, then $\varphi(G)=2n+2$.
\end{proposition}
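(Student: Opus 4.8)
The plan is to apply the big lemma to turn the problem into a combinatorial one: since $\varphi(G)=m+1$, where $m$ is the length of the longest chain of transpositions, it suffices to prove that the maximal chain length in $Sp_{2n}(2)$ is exactly $2n+1$. Recall that in this group a chain corresponds to nonzero vectors $v_1,\dots,v_m$ of the $2n$-dimensional symplectic space $V$ over $\mathbb{F}_2$ whose Gram matrix is the tridiagonal matrix with zeros on the diagonal and ones on the two off-diagonals, because consecutive vectors must satisfy $(v_i,v_{i+1})\neq 0$ while non-consecutive vectors must be orthogonal.

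For the upper bound I would simply invoke the small lemma: in any chain $v_1,\dots,v_m$ the vectors $v_1,\dots,v_{m-1}$ are linearly independent, so $m-1\le\dim V=2n$ and hence $m\le 2n+1$. This already yields $\varphi(G)\le 2n+2$.

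The substance of the proof, and where I expect the main obstacle to be, is the construction of a chain of length exactly $2n+1$. First I would record the rank of the $(2n+1)\times(2n+1)$ tridiagonal matrix over $\mathbb{F}_2$: it equals $2n$, with kernel spanned by the vector supported on the odd coordinates, whereas the analogous $2n\times 2n$ matrix is non-degenerate. Since a non-degenerate alternating form on $\mathbb{F}_2^{2n}$ is the symplectic form up to isometry, there is a basis $v_1,\dots,v_{2n}$ of $V$ realizing the $2n\times 2n$ tridiagonal Gram matrix, so that $(v_i,v_j)=1$ precisely when $|i-j|=1$. I would then set $v_{2n+1}=v_1+v_3+\cdots+v_{2n-1}$ and verify that $v_1,\dots,v_{2n+1}$ is a chain: computing $(v_{2n+1},v_j)$ for each $j\le 2n$ amounts to counting modulo $2$ the odd-indexed summands adjacent to $v_j$, namely the neighbours $v_{j-1}$ and $v_{j+1}$; for $j$ even and $j<2n$ both occur and cancel over $\mathbb{F}_2$, for $j$ odd neither is a summand, and only at $j=2n$ does a single term $v_{2n-1}$ survive, giving $(v_{2n+1},v_{2n})=1$ and all other pairings zero, exactly as the tridiagonal pattern demands.

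The final, and genuinely necessary, point is the hypothesis $n\ge 2$: the members of a chain must be pairwise distinct and nonzero, and $v_{2n+1}$ is a sum of $n$ distinct basis vectors, which is nonzero and distinct from every $v_i$ only when $n\ge 2$ (for $n=1$ one would obtain $v_3=v_1$). Having exhibited a chain of length $2n+1$, the maximal chain length is exactly $2n+1$, and the big lemma gives $\varphi(G)=2n+2$.
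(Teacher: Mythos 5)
Your proof is correct, and while it shares the paper's skeleton --- the big lemma reduces everything to showing that the longest chain in $Sp_{2n}(2)$ has length exactly $2n+1$ --- both halves are handled by genuinely different means. For the upper bound you use only the small lemma: the first $m-1$ vectors of any chain are linearly independent, so $m\le 2n+1$. The paper never makes this dimension count explicit; instead it writes down a linearly independent chain of length $2n$ in the standard hyperbolic basis, namely $v_1+v_2,\,v_2+v_3,\dots,v_{2n-1}+v_{2n},\,(v_1+\cdots+v_{2n-2})+v_{2n}$, invokes the main lemma (hence Witt's lemma) to extend it to a maximal chain, and then computes that orthogonality to $v_1+v_2,\dots,v_{2n-1}+v_{2n}$ forces the next vector to be $v_1+\cdots+v_{2n}$, after which no further continuation exists. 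For the lower bound, you obtain your chain abstractly, realizing the tridiagonal Gram matrix through the classification of non-degenerate alternating forms over $\mathbb{F}_2$ and appending the sum of the odd-indexed basis vectors, whereas the paper's chain is fully explicit in coordinates. Your route is more elementary and self-contained for this particular group: no appeal to Witt's lemma is needed, and the maximality is immediate from dimension. What the paper's heavier template buys is uniformity: the pattern ``extend by the main lemma, then compute the forced continuation'' is precisely what gets reused in the unitary and orthogonal propositions, where the naive bound $m\le\dim V+1$ is \emph{not} attained and one must determine exactly where the forced continuation becomes singular or linearly dependent. Finally, your explicit justification of the hypothesis $n\ge 2$ (for $n=1$ the appended vector collapses onto $v_1$) is a point the paper leaves implicit.
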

\begin{proof}
Assume that $V$ is a symplectic space of dimension $2n$ over the $2$-element field. Let $v_1,...,v_{2n}$ be a basis for $V$ with $(v_i,v_j)=1$ if $\{i,j\} \in \{ \{1,2\},\{3,4\},...,$ $\{2n-1,2n\} \}$ and $(v_i,v_j)=0$ otherwise. \newline \newline
Note that the following vectors are linearly independent and form a chain:
\begin{equation*}
    v_1+v_2,v_2+v_3,...,v_{2n-1}+v_{2n},(v_1+v_2+...+v_{2n-2})+v_{2n}.
\end{equation*}
Thus, by the main lemma, this may be continued to a maximal chain. Let $c_1v_1+...+c_{2n}v_{2n}$ be the next vector in the chain. Then, it is orthogonal to each of the vectors
\begin{equation*}
    v_1+v_2,v_2+v_3,...,v_{2n-1}+v_{2n}.
\end{equation*}
Therefore, $c_1=c_2=...=c_{2n}$. Moreover, the vector is non-zero, therefore $c_1=1$. Hence, the next vector in this chain is $v_1+...+v_{2n}$. Note that this chain cannot be continued anymore, as there are no other non-zero vectors orthogonal to vectors $v_1+v_2,v_2+v_3,...,v_{2n-1}+v_{2n}$. Hence, the maximal chain has length $2n+1$, so by the big lemma, $\varphi(G)=(2n+1)+1=2n+2$.
\end{proof}
\begin{proposition} Let $G \cong U_n(2)$. Then, $\varphi(G)=n+1$, when $n \ge 5$ is odd, and $\varphi(G)=n+2$, when $n \ge 4$ is even.
\end{proposition}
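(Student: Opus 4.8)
The plan is to invoke the big lemma, which reduces the computation of $\varphi(G)$ to finding the length $m$ of a maximal chain of (singular) vectors in the $n$-dimensional unitary space $V$ over $\mathbb{F}_4$, whereupon $\varphi(G) = m+1$. The feature that distinguishes this case from the symplectic one is that a transvection $t(v)$ is a transposition only when $v$ is singular, so every vector in a chain must be singular. Consequently, for any chain $v_1,\dots,v_m$ the Gram matrix $A=\big((v_i,v_j)\big)$ is Hermitian and tridiagonal: its diagonal vanishes (singularity), the entries $a_i=(v_i,v_{i+1})$ just off the diagonal are nonzero (adjacency), and all remaining entries are $0$ (non-adjacent vectors are orthogonal). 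Non-collinearity of the $v_i$ is automatic from the adjacency pattern.

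For the upper bound I would compute the determinant of such a tridiagonal, zero-diagonal Hermitian matrix. Cofactor expansion gives the recurrence $D_r=-a_{r-1}\overline{a_{r-1}}\,D_{r-2}$ with $D_0=1$ and $D_1=0$; hence $D_r=0$ for odd $r$, while for even $r$ it equals $\prod_i a_i\overline{a_i}$ up to sign. Over $\mathbb{F}_4$ one has $a_i\overline{a_i}=a_i^{3}=1$ whenever $a_i\neq 0$, so this product is nonzero. Now suppose a chain has length $n+1$. By the small lemma its first $n$ vectors are linearly independent and therefore form a basis of $V$, whose Gram matrix is exactly the $n\times n$ tridiagonal matrix above. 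Since $V$ is non-degenerate, the Gram matrix of a basis is invertible, forcing $D_n\neq 0$ and hence $n$ even. Thus for odd $n$ no chain of length $n+1$ can exist and $m\le n$; for even $n$ the small lemma still forbids a chain of length $n+2$, giving $m\le n+1$.

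For the lower bound I would exhibit explicit chains in a hyperbolic basis $e_1,f_1,\dots,e_k,f_k$ with $(e_i,f_i)=1$ and all other basis products zero. For even $n=2k$ the sequence
\[ e_1,\ f_1,\ e_1+e_2,\ f_2,\ e_2+e_3,\ f_3,\ \dots,\ e_{k-1}+e_k,\ f_k,\ e_k \]
consists of singular vectors (each $e_{i-1}+e_i$ is singular because $e_{i-1}\perp e_i$), has length $2k+1=n+1$, and a direct check confirms that consecutive vectors are non-orthogonal while non-consecutive ones are orthogonal; hence $m\ge n+1$ and $\varphi(G)=n+2$. For odd $n=2k+1$ I would take the very same sequence inside the $2k$-dimensional hyperbolic subspace $H=\langle e_1,f_1,\dots,e_k,f_k\rangle$ of $V$ (the anisotropic line is simply unused); it is a chain of length $2k+1=n$, so $m\ge n$, and together with the upper bound this yields $\varphi(G)=n+1$.

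The \textbf{main obstacle} is the parity dichotomy, and the determinant computation is exactly what produces it: it simultaneously caps the odd-dimensional chains at length $n$ and permits the even-dimensional ones to reach $n+1$. The crux is therefore recognizing that singularity forces the zero diagonal, and that a tridiagonal Hermitian matrix with zero diagonal is singular precisely in odd size. The rest is routine verification that the displayed vectors are singular and realize the exact adjacency pattern of a chain.
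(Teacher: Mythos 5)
Your proof is correct, but it takes a genuinely different route from the paper's. For the upper bound, the paper fixes an orthonormal basis, writes down the explicit chain $v_1+v_2,\dots,v_{n-1}+v_n$ (adding $v_1+\dots+v_{n-1}+\alpha v_n$ when $n$ is even), and then leans on its ``main lemma'' --- hence on Witt's theorem --- to extend that particular chain to a maximal one; solving the orthogonality and singularity constraints shows the next vector is unique, and linear dependence together with the small lemma terminates the chain. Your Gram-matrix argument replaces all of this: since every vector in a chain must be singular, the Gram matrix of any chain is tridiagonal Hermitian with zero diagonal, the recurrence $D_r=-a_{r-1}\overline{a_{r-1}}D_{r-2}$ forces $D_r=0$ in odd size, and so for odd $n$ the first $n$ vectors of a hypothetical $(n+1)$-chain would (by the small lemma) form a basis of $V$ with singular Gram matrix, contradicting non-degeneracy. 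This bounds \emph{every} chain at once rather than only extensions of one particular chain, dispenses with Witt's theorem on the upper-bound side, and isolates the structural reason for the parity dichotomy, which the paper's case-by-case computation leaves implicit. The cost is that you must supply your own extremal chains, which you do in a hyperbolic basis rather than the paper's orthonormal one; your sequences do check out (all vectors singular, non-orthogonality exactly between consecutive terms), with the small caveat that the odd-$n$ construction quietly uses the standard fact that an odd-dimensional $\mathbb{F}_4$-unitary space contains a hyperbolic subspace of codimension one. Both arguments are valid; yours is more conceptual on the upper bound, while the paper's has the merit of reusing uniformly the chain-extension machinery it deploys for the other classical groups.
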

\begin{proof}
Assume that $V$ is a unitary space of dimension $n$ over the $4$-element field $\mathbb{F}_4$. Let $v_1,...,v_n$ be an orthonormal basis. We write  $0,1,\alpha,\overline{\alpha}$ for the elements of $\mathbb{F}_4$ and $\overline{x}$ for the conjugate of an element $x$ in $\mathbb{F}_4$. \newline \newline
Now, assume that $n \ge 5$ is odd. Then, the following vectors form a chain and are linearly independent:
\begin{equation*}
    v_1+v_2,v_2+v_3,...,v_{n-1}+v_n.
\end{equation*}
According to the main lemma, this chain can be continued to a maximal chain. Let $c_1v_1+...c_nv_n$ be the next vector in this chain. Then, it is orthogonal to the vectors $v_1+v_2,...,v_{n-2}+v_{n-1}$. Therefore, $c_1=c_2=...=c_{n-1}$. Moreover, this is a singular vector, so $0=(c_1v_1+...+c_nv_n,c_1v_1+...+c_nv_n)=(n-1)c_1\overline{c_1}+c_n \overline{c_n}=c_n \overline{c_n}$, so $c_n=0$. Therefore, up to a constant multiple, there is a unique vector $v_1+...+v_{n-1}$, which continues the chain. Note that $(v_1+v_2)+(v_3+v_4)+...+(v_{n-2}+v_{n-1})+(v_1+...+v_{n-1})=0$, so the vectors in this chain are linearly dependent, therefore by the small lemma, the chain cannot be continued any further. Thus, the maximal chain has length $n$ and $\varphi(G)=n+1$. \newline \newline
Now, assume that $n \ge 4$ is even. Then, the following vectors are linearly independent and form a chain:
\begin{equation*}
    v_1+v_2,v_2+v_3,...,v_{n-1}+v_n,v_1+v_2+...+v_{n-1}+\alpha v_n.
\end{equation*}
According to the main lemma, this chain can be continued to a maximal chain. Let $c_1v_1+...+c_nv_n$ be the next vector in this chain. Then, it is orthogonal to the vectors $v_1+v_2,...,v_{n-1}+v_{n}$. Hence, $c_1=c_2=...=c_{n}$. Therefore, up to a constant multiple, there is a unique vector $v_1+...+v_{n}$, which continues the chain. Note that $(v_1+v_2)+(v_3+v_4)+...+(v_{n-2}+v_{n-1})+(v_1+...+v_{n})=0$, so the vectors in this chain are linearly dependent. Thus, by the small lemma, the chain cannot be continued any further. Thus, the maximal chain has length $n+1$ and $\varphi(G)=(n+1)+1=n+2$.
\end{proof}
\begin{proposition} Let $G \cong PO_n^{+,+}(3)$ and $n \ge 3$. Then, $\varphi(G)=n$, $n \equiv 1$ (mod $3$), and $\varphi(G)=n+1$ otherwise.
\end{proposition}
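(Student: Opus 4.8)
The plan is to invoke the big lemma and reduce to a purely geometric question: if $M$ denotes the length of a longest chain of $\pi=+$ vectors in the natural module, then $\varphi(G)=M+1$, so it suffices to determine $M$. I would model $V$ as the orthonormal space $\mathbb{F}_3^n$ with basis $e_1,\dots,e_n$ and $(e_i,e_j)=\delta_{ij}$; the Gram determinant of this basis is $1$, a square, which is exactly what $\mu=+$ records, so $V$ has the correct type. A direct check shows that the $\pi=+$ vectors (those with $Q(v)=+$) are precisely the vectors with $(v,v)=-1$, for instance $e_i-e_j$, and these are the building blocks of the chains I will use.

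The computational heart of the argument concerns the Gram matrix of a chain $v_1,\dots,v_M$ of $\pi=+$ vectors. Because every diagonal entry equals $(v_i,v_i)=-1$, consecutive entries $(v_i,v_{i+1})$ are nonzero (hence $\pm1$), and all remaining entries vanish, this matrix is tridiagonal with constant diagonal $-1$ and off-diagonal entries $\pm1$. Since a tridiagonal determinant only sees the squares of its off-diagonal entries, its value $D_M$ is independent of the chosen signs and satisfies $D_M=-D_{M-1}-D_{M-2}$ over $\mathbb{F}_3$ with $D_0=1,\ D_1=-1$. Solving the recurrence gives the congruence $D_M\equiv M+1\pmod{3}$; in particular $D_M$ vanishes exactly when $M\equiv 2\pmod{3}$, is a nonzero square ($\equiv 1$) when $M\equiv 0$, and a nonsquare ($\equiv 2$) when $M\equiv 1$.

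With this in hand the upper bound is essentially linear algebra. By the small lemma the first $M-1$ vectors of any length-$M$ chain are independent, so $M\le n+1$. If $M\not\equiv 2\pmod{3}$ then $D_M\ne0$, so all $M$ vectors are independent and their span is a nondegenerate subspace of $V$ of dimension $M$ whose discriminant class is $[D_M]=[M+1]$; if this span is all of $V$ it must then share the discriminant $+$ of $V$, which forces $M\equiv0\pmod{3}$. If $M\equiv 2\pmod{3}$ then $D_M=0$, so the last vector is dependent on the first $M-1$, whose span has dimension $M-1$ and discriminant class $-$. Running through $n\equiv0,1,2\pmod{3}$ and comparing discriminants against that of $V$ then yields $M\le n-1$ when $n\equiv1$ and $M\le n$ otherwise. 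For the matching lower bound I would write down explicit chains with backbone $e_1-e_2,\dots,e_{n-1}-e_n$: when $n\equiv0$ I append $e_1+\dots+e_{n-1}$, an independent $\pi=+$ vector, to reach length $n$; when $n\equiv2$ I append $e_1+\dots+e_{n-1}-e_n$, which is $\pi=+$ and non-orthogonal to $e_{n-1}-e_n$ but lies in the backbone's span, giving a length-$n$ chain whose dependent last vector makes it non-extendable by the small lemma; and when $n\equiv1$ the backbone alone is already maximal, since any extending vector must (by orthogonality to all but the last backbone vector) have equal first $n-1$ coordinates, and then the requirement $(v,v)=-1$ has no solution modulo $3$. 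The big lemma converts these into $\varphi(G)=n$ for $n\equiv1$ and $\varphi(G)=n+1$ otherwise.

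The step I expect to be the main obstacle is the discriminant bookkeeping in the upper bound: one must handle the independent and dependent cases uniformly and pin down not merely whether the span of a chain is degenerate but its precise discriminant class as a function of $M\bmod 3$, then argue that matching this class against the type-$\mu=+$ space $V$ genuinely excludes the longer chains. Getting the elementary input right — that $Q(v)=\pi=+$ is equivalent to $(v,v)=-1$, so that the diagonal constant is $-1$ and the determinant congruence reads $D_M\equiv M+1\pmod{3}$ — is what makes the entire modulo-$3$ dichotomy fall out correctly.
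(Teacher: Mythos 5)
Your proposal is correct, but it takes a genuinely different route from the paper's. The paper starts from the backbone $v_1-v_2,\dots,v_{n-1}-v_n$, invokes the main lemma (hence Witt's lemma) to embed it into a maximal chain, and then solves coordinate-by-coordinate for the possible next vector, splitting into cases modulo $3$; maximality is then deduced from uniqueness of the continuation together with the small lemma. You instead bound \emph{every} chain at once by an invariant: the Gram matrix of any chain of $\pi=+$ vectors is tridiagonal with diagonal $-1$ and off-diagonal entries $\pm1$, so its determinant satisfies $D_M=-D_{M-1}-D_{M-2}$, whence $D_M\equiv M+1\pmod{3}$, and comparing the square class $[D_M]$ (or $[D_{M-1}]$ in the singular case) with the discriminant $\mu=+$ of $V$ rules out $M=n+1$ always and $M=n$ exactly when $n\equiv 1\pmod 3$; your explicit chains then meet these bounds. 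Your route avoids Witt's lemma and the main lemma entirely (only the big and small lemmas are needed), it explains the mod-$3$ periodicity conceptually as a discriminant obstruction rather than as the outcome of a coordinate computation, and it transfers verbatim to the companion proposition on $PO_n^{-,+}(3)$ by swapping the allowed square class; the paper's method buys instead a uniform template of coordinate arguments that it reuses across all the classical groups in the list.

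One sentence of yours is stated too strongly: ``if $M\equiv 2\pmod 3$ then $D_M=0$, so the last vector is dependent on the first $M-1$.'' Singularity of the Gram matrix does not by itself imply linear dependence: the length-$2$ chain $e_1-e_2,\,e_2-e_3$ is linearly independent yet has singular Gram matrix, because its span is degenerate. The inference is valid only where you actually use it, namely when $M\ge n$: for $M=n+1$ dependence is forced by dimension, and for $M=n$ independence would make the span equal to the nondegenerate space $V$, contradicting $D_M=0$. With that qualification inserted, the discriminant bookkeeping and hence the upper bound go through exactly as you describe, so this is a repairable imprecision rather than a gap.
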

\begin{proof}
Assume that $(V,Q)$ is an $n$-dimensional orthogonal space over the $3$-element field. Let $v_1,...,v_n$ be a basis for $V$ with $(v_i,v_j)=0$ if $i \ne j$ and $(v_i,v_i)=1$. Note that $Q(v)=2(v,v)=-(v,v)$. \newline \newline
Then, the following vectors are linearly independent and form a chain:
\begin{equation*}
    v_1-v_2,v_2-v_3,...,v_{n-1}-v_n.
\end{equation*}
According to the main lemma, this chain can be continued to a maximal chain. Let $v=c_1v_1+...+c_nv_n$ be the next vector in the chain. Then, it is orthogonal to the vectors $v_1-v_2,v_2-v_3,...,v_{n-2}-v_{n-1}$, so $c_1=c_2=...=c_{n-1}$. Note that if $c_1=0$, then $Q(c_1v_1+...+c_nv_n)=Q(c_nv_n)=c_n^2Q(v_n)=-c_n^2 \ne 1$. So, $c_1 \ne 0$. Note that if $c_1=-1$, then we can multiply $v$ by $-1$. Then, $v=(v_1+...+v_{n-1})+c_nv_n$. \newline \newline
If $n \equiv 0$ (mod $3$), then $1=Q(v)=-(v,v)=-(-1+c_n^2)=1-c_n^2$, thus $c_n^2=0$, which implies $c_n=0$. Thus, $v=v_1+...+v_{n-1}$. Note that $v$ is the only vector (up to a constant multiple) with $Q(v)=1$, which is orthogonal to $v_1-v_2,v_2-v_3,...,v_{n-2}-v_{n-1}$. Thus, this chain cannot be continued anymore. So, the maximal chain has length $n$ and $\varphi(G)=n+1$. \newline \newline
If $n \equiv 1$ (mod $3$), then $1=Q(v)=-(v,v)=-(0+c_n^2)=-c_n^2$, which is impossible. So, the chain $v_1-v_2,...,v_{n-1}-v_n$ is a maximal chain, and $\varphi(G)=(n-1)+1=n$. \newline \newline
If $n \equiv 2$ (mod $3$), then $1=Q(v)=-(v,v)=-(-1+c_n^2)=1-c_n^2$, so $c_n^2=0$, which implies $c_n \in \{ -1, 1 \}$. But $v$ is not orthogonal to $v_{n-1}-v_n$, so $c_n=-1$. Note that, $(v_1+...+v_{n-1}-v_n)-(v_1-v_2)+(v_2-v_3)-(v_4-v_5)+(v_5-v_6)-...-(v_{n-4}-v_{n-3})+(v_{n-3}-v_{n-2})-(v_{n-1}-v_n)=0$. So, the system is linearly dependent, and by the small lemma this chain cannot be continued. Therefore, the maximal chain has length $n$ and $\varphi(G)=n+1$.
\end{proof}
\begin{proposition} Let $G \cong PO_n^{-,+}(3)$ and $n \ge 4$. Then, $\varphi(G)=n$, if $n \equiv 0$ (mod $3$), $\varphi(G)=n+1$, if $n \equiv 2$ (mod $3$) and $\varphi(G)=n+2$, if $n \equiv 1$ (mod $3$).
\end{proposition}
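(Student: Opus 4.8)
The plan is to follow the pattern of the preceding propositions, but to organize the whole case analysis around the discriminant of the span of a chain, which turns out to govern everything. First I fix a basis $v_1,\dots,v_n$ with $(v_i,v_j)=0$ for $i\neq j$, $(v_i,v_i)=1$ for $i<n$ and $(v_n,v_n)=-1$; the Gram determinant is then $-1$, matching $\mu=-$, and since $Q(v)=-(v,v)$ over $\mathbb{F}_3$, the reflection (transposition) vectors are exactly those with $(v,v)=2$. As before I will exhibit an explicit linearly independent chain and continue it to a maximal one by the main lemma, but the decisive new ingredient is an observation about the Gram matrix of an \emph{arbitrary} chain.

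If $w_1,\dots,w_m$ is a chain then $(w_i,w_i)=2$, $(w_i,w_j)=0$ for $|i-j|\geq 2$, and $(w_i,w_{i+1})\in\{1,-1\}$. Since $t(w)=t(-w)$, I may rescale the $w_i$ by signs along the path and assume every off-diagonal entry is $-1$, so the Gram matrix of the chain is precisely the Cartan matrix of type $A_m$, whose determinant is $m+1$. Reducing modulo $3$: when $m\not\equiv 2\pmod 3$ this matrix is nonsingular, so the $w_i$ are independent and span a non-degenerate subspace $U$ of dimension $m$ and discriminant $m+1$; when $m\equiv 2\pmod 3$ it has rank $m-1$ with a one-dimensional radical (the null vector has entries $0,1,-1$ repeating with period $3$), and deleting $w_m$ leaves the nonsingular block $A_{m-1}$, of discriminant $m\equiv -1$.

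The maximality bounds then come from Witt's lemma in the form: a non-degenerate space of dimension $d$ and discriminant $\delta$ embeds isometrically in $V$ iff $d\leq n-1$, or $d=n$ and $\delta=-1$ (for $d<n$ one picks a complementary space of the compensating discriminant; for $d=n$ one needs $U\cong V$). Applying this, a non-degenerate chain of length $m$ exists iff $m\leq n-1$, or $m=n$ with $m+1\equiv -1$, i.e. $m\equiv 1\pmod 3$; and a degenerate chain ($m\equiv 2$) exists iff its non-degenerate part, of dimension $m-1$ and discriminant $-1$, embeds, i.e. iff $m-1\leq n$, i.e. $m\leq n+1$. In the degenerate case I realize the chain concretely by taking a basis of the relevant subspace with Gram matrix $A_{m-1}$ (available by Witt) and then adjoining the vector $w_m$ forced by the radical relation; a short check confirms $w_m$ has norm $2$ and the correct inner products, so it genuinely closes up into a chain (equivalently, the small lemma detects that the chain terminates there). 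Running the three residues of $n$ yields maximal chain length $m=n-1,\,n,\,n+1$ for $n\equiv 0,\,2,\,1\pmod 3$, and the big lemma converts these into $\varphi(G)=n,\,n+1,\,n+2$ respectively.

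The main obstacle I anticipate is the degenerate case $m\equiv 2\pmod 3$: there one must pin down the radical exactly, argue that the discriminant of the non-degenerate part equals $-1$ (so that it can fill all of $V$ precisely when $m-1=n$, which is what produces the unexpectedly long chain of length $n+1$ for $n\equiv 1$), and verify that the forced dependent vector $w_m$ is a legitimate extension. A secondary point needing care is the sign-normalization reducing a general chain's Gram matrix to $A_m$, together with the bookkeeping that no chain exceeding the stated length embeds; for the latter the decisive obstruction is the discriminant mismatch (square versus non-square determinant over $\mathbb{F}_3$) at the top dimension.
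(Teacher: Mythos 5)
Your proposal is correct, and it takes a genuinely different route from the paper's proof. The paper argues residue by residue in explicit coordinates: for each congruence class of $n$ it writes down a concrete linearly independent chain, extends it to a maximal chain via the main lemma, solves the orthogonality equations to show the next vector is essentially unique, and then exhibits an explicit linear dependence so that the small lemma forces termination. You instead classify the Gram matrix of an \emph{arbitrary} chain: after sign-normalization (legitimate since $t(w)=t(-w)$) a chain of length $m$ has Gram matrix the type $A_m$ Cartan matrix, of determinant $m+1$, so chain existence becomes realizability of that form inside $V$, which the classification of nondegenerate quadratic forms over $\mathbb{F}_3$ by dimension and discriminant, together with Witt's lemma, settles completely: a nonsingular $A_m$ ($m\not\equiv 2\pmod 3$) is realizable iff $m\le n-1$, or $m=n$ and $m+1\equiv -1\pmod 3$; a singular $A_m$ ($m\equiv 2\pmod 3$) is realizable iff its nondegenerate block $A_{m-1}$, of discriminant $m\equiv -1$, is, i.e.\ iff $m-1\le n$. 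Your key computations check out: the radical of $A_m$ is spanned by $(1,-1,0,1,-1,0,\dots,1,-1)$ (note the phase — the last entry must be nonzero, which is exactly what makes the adjoined vector $w_m$ a combination of the earlier ones), the forced vector $w_m$ satisfies $(w_m,w_j)=(A_m)_{mj}$ and $(w_m,w_m)=2$ by the radical relation, and the three residues of $n$ give maximal lengths $n-1,\,n,\,n+1$ for $n\equiv 0,\,2,\,1\pmod 3$, matching the paper. What your approach buys is uniformity and explanation: one discriminant computation covers all cases, makes structurally visible why the exceptional length-$(n+1)$ chain occurs precisely when $n\equiv 1\pmod 3$ (the radical of $A_{n+1}$), and would reprove the $PO_n^{+,+}(3)$ proposition verbatim with target discriminant $+1$; what the paper's approach buys is self-containedness, using only its stated big/small/main lemmas and hand computation rather than the classification of forms over finite fields. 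Two points worth tightening if you write this up: the forward implication of your degenerate-case criterion should be stated as an application of the small lemma (the first $m-1$ vectors are independent because $A_{m-1}$ is nonsingular), and the dependent-vector construction needs $m\ge 3$ so that $t(w_m)\ne t(w_{m-1})$ — harmless here, since every chain you must construct has $m\ge 5$.
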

\begin{proof}
Assume that $(V,Q)$ is an $n$-dimensional orthogonal space over the $3$-element field. Let $v_1,...,v_n$ be a basis for $V$ with $(v_i,v_j)=0$ if $i \ne j$, $(v_i,v_i)=1$, if $i<n$ and $(v_n,v_n)=-1$. Note that $Q(v)=2(v,v)=-(v,v)$. \newline \newline
Now, assume that $n \equiv 1$ (mod $3$) with $n \ge 4$. Then, the following vectors form a chain:
\begin{gather*}
    v_1-v_2, v_2-v_3, ..., v_{n-2}-v_{n-1}, (v_1+...+v_{n-2})-v_{n-1}+v_n, \\ (v_1+...+v_{n-2}+v_{n-1})-v_n, v_1+...+v_n.
\end{gather*}
Moreover, as a consequence of the small lemma, there cannot be a chain of length more than $n+1$. Thus, the longest chain has length $n+1$ and $\varphi(G)=(n+1)+1=n+2$. \newline \newline
Now, assume that $n \equiv 2$ (mod $3$) with $n \ge 5$. Then, the following vectors are linearly independent and form a chain:
\begin{equation*}
    v_1-v_2, v_2-v_3, ..., v_{n-2}-v_{n-1}, (v_1+...+v_{n-2})+v_n. 
\end{equation*}
Therefore, this chain can be continued to a maximal chain. Let $v=c_1v_1+...+c_nv_n$ be the next vector. Then, it is orthogonal to vectors $v_1-v_2, v_2-v_3, ..., v_{n-2}-v_{n-1}$. Therefore, $c_1=...=c_{n-1}$. Note that if $c_1 \ne 0$, then $1=Q(v)=(n-1)c_1^2+c_n^2=(n-1)+c_n^2=1+c_n^2$, so $c_n=0$. But then, $(v,(v_1+...+v_{n-2})+v_n)=n-2=0$. So, $c_1=0$. Then, $v=v_n$ (or $-v_n$, these differ by a multiple of $-1$). Note that this chain cannot be continued, because $((v_1+...+v_{n-2})+v_n)-(v_1-v_2)+(v_2-v_3)-(v_4-v_5)+(v_5-v_6)-...-(v_{n-4}-v_{n-3})+(v_{n-3}-v_{n-2})-v_n=0$, so the vectors in this chain are linearly dependent. So, the maximal chain has length $n$ and $\varphi(G)=n+1$. \newline \newline
Finally, assume that $n \equiv 0$ (mod $3$) and $n \ge 6$. Then, the following vectors are linearly independent and form a chain:
\begin{equation*}
    v_1-v_2,v_2-v_3,...,v_{n-2}-v_{n-1}.
\end{equation*}
By the main lemma, this can be continued to a maximal chain. Let $c=c_1v_1+...+c_nv_n$ be the next vector. Then, $c_1=...=c_{n-2}$. Then $1=Q(v)=-(v,v)=-(1+c_{n-1}^2-c_n^2)$. The only possibility is $c_n=0$ and $c_{n-1} \ne 0$. But then $(v,v_{n-2}-v_{n-1}) \ne 0$, so $c_{n-1}=-1$. So, $v=(v_1+...+v_{n-2})-v_{n-1}$. Note that this chain cannot be continued, because $((v_1+...+v_{n-2})-v_{n-1})-(v_1-v_2)+(v_2-v_3)-(v_4-v_5)+(v_5-v_6)-...+(v_{n-4}-v_{n-3})-(v_{n-2}-v_{n-1})=0$, is linearly dependent. Thus, the maximal chain has length $n-1$ and $\varphi(G) = (n-1)+1=n$.
\end{proof}
In \cite{As96} it is proven that if $n$ is odd, then $PO_n^{+,-}(3) \cong PO_n^{-,+}(3)$ and $PO_n^{-,-}(3) \cong PO_n^{+,+}(3)$. Moreover, if $n$ is even, then $PO_n^{+,-}(3) \cong PO_n^{+,+}(3)$ and $PO_n^{-,-}(3) \cong PO_n^{-,+}(3)$. Therefore, the previous propositions can be used to find the results for the groups $PO_n^{\mu,-}(3)$ as well. Moreover, if $n$ is odd, then $O_n^{+}(2) \cong O_n^{-}(2) \cong Sp_{n-1}(2)$ as a $3$-transposition group. 
\begin{proposition} Let $G \cong O_n^{+}(2)$, $n \ge 6$ be even. Then,  $\varphi(G)=n$, if $n \equiv 2$ (mod $8$), $\varphi(G)=n+2$, if $n \equiv 6$ (mod $8$) and $\varphi(G)=n+1$, if $4 | n$ and $n \ge 8$.
\end{proposition}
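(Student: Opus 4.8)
The plan is to follow the pattern of the preceding propositions: by the big lemma it suffices to determine $m$, the length of a maximal chain of non-singular vectors, since then $\varphi(G)=m+1$. Recall that over $\mathbb{F}_2$ the bilinear form is alternating, a transvection $t(v)$ lies in $D$ exactly when $Q(v)=1$, and two transvections commute iff the corresponding vectors are orthogonal; so a chain is a sequence $v_1,\dots,v_m$ with $Q(v_i)=1$ for all $i$, $(v_i,v_{i+1})=1$, and $(v_i,v_j)=0$ whenever $|i-j|\ge 2$.

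First I would fix a convenient coordinate model of the plus-type space. A natural choice is a basis $u_1,\dots,u_n$ with $(u_i,u_j)=1$ for $i\ne j$ and $Q(u_i)=1$; the Gram matrix $J+I$ is non-degenerate precisely because $n$ is even. A short computation gives $Q\!\left(\sum_{i\in S}u_i\right)=\binom{|S|+1}{2}\bmod 2$, so a vector is non-singular iff its weight is $\equiv 1,2\pmod 4$, and $\left(\sum_{i\in S}u_i,\sum_{j\in T}u_j\right)=|S|\,|T|+|S\cap T|\bmod 2$. Computing the Arf invariant of this form (equivalently, counting singular vectors) shows that $J+I$ realises the plus type exactly when $n\equiv 0,6\pmod 8$; this residue-mod-$8$ phenomenon is the source of the cases in the statement. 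In the residues where $J+I$ has the wrong type I would pass to a modified plus-type model, e.g. by altering $Q$ on a basis vector (which flips the Arf invariant) or by splitting off a hyperbolic plane, and repeat the analysis there.

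The backbone of every case is the consecutive-sum chain $u_1+u_2,\ u_2+u_3,\ \dots,\ u_{n-1}+u_n$: these are non-singular (weight $2$), linearly independent, and form a chain of length $n-1$, because two weight-$2$ vectors are non-orthogonal iff their supports meet in exactly one point. By the main lemma this chain extends to a maximal one, and I would pin down the maximal length by computing the forced continuations exactly as in the symplectic and unitary propositions. A candidate next vector $x=\sum_{i\in S}u_i$ must be orthogonal to all but the last chain vector; since $(x,\,u_i+u_{i+1})=[i\in S]+[(i+1)\in S]$, this forces the membership of $1,2,\dots,n-1$ (or $1,\dots,n$) in $S$ to be constant, so $x$ is essentially determined. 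Whether the chain then terminates or lengthens is governed by the non-singularity of the terminal vector $u_1+\dots+u_n$, whose weight is $n$, hence whose $Q$-value is $\binom{n+1}{2}\bmod 2$ and depends on $n\bmod 4$; this is where the $+0/+1/+2$ corrections enter. When a forced continuation exists but produces a linear dependence among the chain vectors, the small lemma certifies that the chain cannot be continued, determining $m$ and hence $\varphi(G)$.

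The step I expect to be the main obstacle is getting the plus type realised correctly and uniformly across all four residues mod $8$: the clean all-ones model only has plus type for $n\equiv 0,6\pmod 8$, so the residues $n\equiv 2,4\pmod 8$ require a different model in which the consecutive-sum chain, the forced-continuation computation, and the terminal non-singularity check must all be redone, with the Arf-invariant bookkeeping kept consistent. The geometric (matrix) representation promised in the introduction would then serve as an independent verification of the chain lengths obtained this way.
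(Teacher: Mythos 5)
Your overall strategy (consecutive-sum backbone chain, the big/main/small lemmas, forced-continuation analysis) is exactly the paper's, and your bookkeeping in the all-ones model is correct: with $(u_i,u_j)=1$ for $i\ne j$ and $Q(u_i)=1$ one indeed has $Q\left(\sum_{i\in S}u_i\right)=\binom{|S|+1}{2}\bmod 2$, and this form has plus type precisely when $n\equiv 0,6\pmod 8$; for those two residues your plan goes through and yields $\varphi(G)=n+1$ and $n+2$ respectively. But the residues $n\equiv 2,4\pmod 8$ --- half of the statement, including the entire case $\varphi(G)=n$ --- are deferred to an unspecified ``modified plus-type model,'' and this is a genuine gap, not a routine patch. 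Worse, the specific fix you propose, altering $Q$ on one basis vector to ``flip the Arf invariant,'' provably fails when $n\equiv 4\pmod 8$: changing $Q(u_1)$ from $1$ to $0$ replaces $Q$ by $Q'=Q+(c,\cdot)$ where $c=u_2+\dots+u_n$, and $Q'$ has the opposite type if and only if $Q(c)=1$, i.e.\ if and only if $\binom{n}{2}$ is odd; for $n\equiv 4\pmod 8$ one gets $Q(c)=0$, so $Q'$ has the same number of singular vectors as $Q$ and is still of minus type. Even in the residue where the flip does work ($n\equiv 2\pmod 8$), your backbone collapses: $Q'(u_1+u_2)=0+1+1=0$ is singular, so the chain and all forced-continuation computations must be rebuilt from scratch --- which you acknowledge but do not carry out.

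The paper avoids this obstacle entirely by choosing a model that is of plus type for every even $n$ simultaneously: a basis split into pairwise orthogonal pairs with $(v_1,v_2)=(v_3,v_4)=\dots=(v_{n-1},v_n)=1$, all other products $0$, $Q(v_i)=1$ for $i<n$, and $Q(v_n)=1$ if and only if $4\mid n$. Each pair then spans an anisotropic (minus-type) plane except possibly the last one, and since types multiply over orthogonal sums this space has plus type in all four residues; the paper then exhibits explicit chains of lengths $n-1$, $n$, or $n+1$ according to the residue class and closes each case with the forced-continuation and small-lemma arguments. To complete your proof you would need the analogous step: either adopt such a uniform plus-type model, or construct, separately for $n\equiv 2\pmod 8$ and $n\equiv 4\pmod 8$, a concrete plus-type quadratic form together with its backbone chain and termination argument. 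As written, those two cases are asserted rather than proven.
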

\begin{proof}
Assume that $V$ is an $2n$-dimensional orthogonal space of $+$ type. Let $v_1,...,v_n$ be a basis with $Q(v_1)=...=Q(v_{n-1})=1$ and $Q(v_n)=1$ if and only if $4 | n$. Also, $(v_1,v_2)=(v_3,v_4)=...=(v_{n-1},v_n)=1$ and the other products are $0$. \newline \newline
First, assume $8 | n$. Then the following vectors are linearly independent and form a chain:
\begin{gather*}
    v_1+v_2,v_2+v_3+v_{n-1},v_3+v_4,v_4+v_5+v_{n-1},...,v_{n-3}+v_{n-2}, \\
    (v_1+...+v_{n-4})+v_{n-2}, v_1+v_2+v_5+v_6+...+v_{n-3}+v_{n-2}+v_n,v_{n-1}.
\end{gather*}
By the main lemma, this chain can be continued to a maximal chain. Note that if the set $w_1,...,w_n$ is a basis for $V$, then the system of equations $(w,w_1)=...=(w,w_{n-1})=0$, $(w,w_n)=1$ has a unique solution. Note that $w_1+w_3+...+w_{n-1}$ is a solution for this system. However, $Q(w_1+w_3...+w_{n-1})=\frac{n}{2}=0$ is singular. Thus, the maximal chain has length $n$ and $\varphi(G)=n+1$. \newline \newline
Now, assume $n \equiv 2$ (mod $8$). Then the following vectors are linearly independent and form a chain:
\begin{gather*}
    v_1+v_2,v_2+v_3+v_{n-1},v_3+v_4,v_4+v_5+v_{n-1},...,v_{n-3}+v_{n-2}, \\
    (v_1+...+v_{n-4})+v_{n-2}+v_{n-1}, v_1+v_2+...+v_{n-1}.
\end{gather*}
By the main equation this chain can be continued to a maximal chain. Let $v=c_1v_1+...+c_nv_n$ be the next vector in the chain. Then, because of conditions of orthogonality, $c_1=c_2$, $c_3=c_4$, ..., $c_{n-3}=c_{n-2}$. In addition, $c_1+c_3+c_n=c_3+c_5+c_n=...=c_{n-5}+c_{n-3}+c_n=0$. In addition, $c_{n-3}+c_n=0$ and $c_n=1$. So, $v=(v_3+v_4)+(v_7+v_8)+...+(v_{n-3}+v_{n-2})+(cv_{n-1}+v_n)$, which is singular. Thus, this is a maximal chain and $\varphi(G)=(n-1)+1=n$. \newline \newline
First, assume $n \equiv 4$ (mod $8$). Then the following vectors are linearly independent and form a chain:
\begin{gather*}
    v_1+v_2,v_2+v_3+v_{n-1},v_3+v_4, \\
    v_4+v_5+v_{n-1},...,v_{n-3}+v_{n-2}, (v_1+...+v_{n-4})+v_{n-2},\\
    v_1+v_2+...+v_{n-2},(v_3+v_4)+(v_7+v_8)+...+(v_{n-5}+v_{n-4})+v_n.
\end{gather*}
By the main lemma, this chain can be continued to a maximal chain. Note that if the set $w_1,...,w_n$ is a basis for $V$, then the system of equations $(w,w_1)=...=(w,w_{n-1})=0$, $(w,w_n)=1$ has a unique solution. Note that $w_1+w_3+...+w_{n-1}$ is a solution for this system. However, $Q(w_1+w_3...+w_{n-1})=\frac{n}{2}=0$ is singular. Thus, the maximal chain has length $n$ and $\varphi(G)=n+1$. \newline \newline
First, assume $n \equiv 6$ (mod $8$). Then the following vectors are linearly independent and form a chain:
\begin{gather*}
    v_1+v_2,v_2+v_3+v_{n-1},v_3+v_4,v_4+v_5+v_{n-1},...,v_{n-3}+v_{n-2}, \\
    (v_1+...+v_{n-4})+v_{n-2}+v_{n-1},\\
    v_1+v_2+...+v_{n-1},(v_3+v_4)+(v_7+v_8)+...+(v_{n-5}+v_{n-4})+v_n, v_{n-1}.
\end{gather*}
Note that this is a maximal chain, because by the small lemma, the chain has length at most $n+1$. Therefore, $\varphi(G)=(n+1)+1=n+2$.
\end{proof}
\begin{proposition} Let $G \cong O_n^{-}(2)$, $n \ge 4$ be even. Then,   $\varphi(G)=n$, if $n \equiv 6$ (mod $8$), $\varphi(G)=n+2$, if $n \equiv 2$ (mod $8$), $\varphi(G)=n+1$, if $4 | n$.
\end{proposition}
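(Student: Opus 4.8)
The plan is to mirror the preceding proposition on $O_n^+(2)$, since the $-$-type space differs from the $+$-type one only in the type of a single coordinate plane. Concretely, I would fix an orthogonal space $(V,Q)$ of dimension $n$ and $-$ type over $\mathbb{F}_2$ together with a basis $v_1,\ldots,v_n$ satisfying $(v_1,v_2)=(v_3,v_4)=\cdots=(v_{n-1},v_n)=1$ with all other inner products $0$, and $Q(v_1)=\cdots=Q(v_{n-1})=1$, where now $Q(v_n)=1$ precisely when $4\nmid n$ (the exact opposite of the convention used for $O_n^+(2)$). The first step is to check that this form really is of $-$ type: each pair $\{v_{2k-1},v_{2k}\}$ spans an anisotropic plane, except that when $4\mid n$ the last pair $\{v_{n-1},v_n\}$ is hyperbolic. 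Since an anisotropic plane has type $-$ and the type of an orthogonal sum over $\mathbb{F}_2$ is the product of the types, one verifies that the total type is $-$ in all three relevant cases $n\equiv 2\pmod 8$, $n\equiv 6\pmod 8$, and $4\mid n$ (in the last case there are $n/2-1$ anisotropic factors, an odd number, times one hyperbolic factor).

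The second step is, for each residue class, to exhibit an explicit chain of the claimed maximal length. Here I would reuse the chains written in the $O_n^+(2)$ proposition, adjusting for the flipped value of $Q(v_n)$ (and tweaking a coefficient where a chain vector involves $v_n$). Because the only change to $Q$ sits on $v_n$, the form values of the ``blocking'' vectors, i.e.\ the unique vectors that could prolong a constructed chain, shift by exactly the altered $Q(v_n)$-contribution, and this is what interchanges the outcomes of the classes $n\equiv 2\pmod 8$ and $n\equiv 6\pmod 8$ relative to the $+$-type case, while leaving the class $4\mid n$ unchanged. Thus for $n\equiv 2\pmod 8$ I would build a chain of length $n+1$ (as in the $+$-type $n\equiv 6\pmod 8$ case), for $n\equiv 6\pmod 8$ a chain of length $n-1$ (as in the $+$-type $n\equiv 2\pmod 8$ case), and for $4\mid n$ a chain of length $n$, treating $8\mid n$ and $n\equiv 4\pmod 8$ separately if the construction demands it, exactly as before. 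By the main lemma each of these chains extends to a maximal chain.

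The third step is maximality. By the small lemma, any chain $u_1,\ldots,u_\ell$ has $u_1,\ldots,u_{\ell-1}$ linearly independent, so $\ell\le n+1$ and $\varphi(G)\le n+2$ in every case; for $n\equiv 2\pmod 8$ this already matches the exhibited chain of length $n+1$, so nothing more is needed there. For the classes $4\mid n$ and $n\equiv 6\pmod 8$, where I claim $\varphi(G)<n+2$, I would argue as in the $+$-type proof: after extending a constructed chain $w_1,\ldots,w_k$ to a basis of $V$, the condition that a further vector $w$ commute with $w_1,\ldots,w_{k-1}$ and fail to commute with $w_k$ is a nondegenerate linear system with a unique solution, and I would compute $Q(w)$ explicitly. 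When $4\mid n$ the unique candidate is the odd-indexed sum $w_1+w_3+\cdots+w_{n-1}$, whose form value is $n/2=0$ in $\mathbb{F}_2$, hence singular and not a transposition vector; when $n\equiv 6\pmod 8$ the analogous candidate is again singular once the flipped $Q(v_n)$ is accounted for. In either case the chain cannot be continued, which pins down the maximal length and hence $\varphi(G)$ via the big lemma.

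The hard part will be the bookkeeping in the second step: producing chains that are simultaneously genuine chains (consecutive vectors non-orthogonal, non-consecutive ones orthogonal), linearly independent, and of the exact claimed length, together with the third-step computation of the relevant $Q$-values modulo $2$, which ultimately depends on $n$ modulo $8$. Conceptually the $-$-type modification is a single sign flip, but verifying that this flip propagates correctly through each explicit family of chains is where the genuine care lies. I would pin down the smallest cases by direct inspection; note that $n=4$ falls under $4\mid n$, predicting $\varphi(G)=5$, which is consistent with the isomorphism $O_4^-(2)\cong S_5$, whose $\binom{5}{2}=10$ transpositions correspond to the $10$ non-singular vectors of the four-dimensional $-$-type space.
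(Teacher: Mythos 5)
Your proposal is correct and is essentially the paper's own argument: the paper proves this proposition with a single sentence declaring it ``analogical'' to the $O_n^{+}(2)$ case, and your plan --- flip $Q(v_n)$ to hold exactly when $4 \nmid n$, verify the resulting form has type $-$, reuse the same explicit chains with the $n \equiv 2$ and $n \equiv 6 \pmod 8$ outcomes interchanged, and block further extension via the singular odd-indexed sum together with the small, main, and big lemmas --- is precisely the analogy the paper intends. Your closing sanity check via $O_4^{-}(2) \cong S_5$ is a nice addition the paper does not make, but the route is the same.
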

The proof of this proposition is analogical to the proof of the previous proposition.
\begin{proposition}
For the small groups we have the following results:
\begin{itemize}
    \item $\varphi(Sp_2(2))=3$.
    \item $\varphi(U_1(2))=1$.
    \item $\varphi(U_2(2))=3$.
    \item $\varphi(U_3(2))=3$.
    \item $\varphi(PO^{+,+}_1(3))=1$.
    \item $\varphi(PO^{+,+}_2(3))=2$.
    \item $\varphi(PO^{-,+}_1(3))=2$.
    \item $\varphi(PO^{-,+}_2(3))=2$.
    \item $\varphi(PO^{-,+}_3(3))=2$.
    \item $\varphi(O^{+}_2(2))=2$.
    \item $\varphi(O^{+}_4(2))=3$.
    \item $\varphi(O^{-}_2(2))=3$.
\end{itemize}
\end{proposition}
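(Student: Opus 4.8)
The plan is to compute each value through the \emph{big lemma}: since $\varphi(G)=m+1$, where $m$ is the length of a longest chain of transpositions, it suffices in every case to determine the maximal chain length. For the classical groups the transpositions are the maps $t(v)$, two of which commute precisely when the corresponding vectors are orthogonal; so a chain is exactly an induced path in the graph whose vertices are the admissible projective points (non-zero, resp.\ singular, resp.\ non-singular, resp.\ with $Q(v)=\pi$) and whose edges join non-orthogonal pairs. Thus for each group I would (i) list the admissible vectors up to scalar, (ii) record the orthogonality (non-commuting) graph, (iii) exhibit a longest induced path and argue no longer one exists, and (iv) read off $\varphi(G)$ by adding one. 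Because all these groups lie below the dimension thresholds of the earlier propositions, the generic chain constructions are unavailable and each case must be settled directly; fortunately each underlying space is tiny, so the enumerations are short.

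For the unitary groups $U_n(2)$ I would argue uniformly via the Witt index. A pair of linearly independent singular vectors that are orthogonal spans a $2$-dimensional totally singular subspace, which exists only when the Witt index $\lfloor n/2\rfloor\ge 2$, i.e.\ $n\ge 4$. Hence for $n\le 3$ no two distinct singular points are orthogonal, so the non-commuting graph on the singular points is complete and the longest chain has length $2$ as soon as two singular points exist. Since $U_2(2)$ and $U_3(2)$ have singular points (Witt index $1$, with $q+1=3$ and $q^3+1=9$ isotropic points respectively), this yields $\varphi=3$ in both cases; and $U_1(2)$ has no singular vector (Witt index $0$), so $D=\emptyset$ and $\varphi=1$.

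The remaining cases I would settle by direct enumeration in a diagonalised space. For the $\mathbb{F}_3$ groups I fix a basis realising the prescribed determinant $\mu$, pick out the reflection vectors $v$ with $Q(v)=\pi$ up to the scalar $\{1,-1\}$, and compute the bilinear form on the (very few) resulting points. I expect that in each of $PO^{+,+}_1(3)$, $PO^{+,+}_2(3)$, $PO^{-,+}_1(3)$, $PO^{-,+}_2(3)$ and $PO^{-,+}_3(3)$ the admissible reflection vectors are either absent (giving $\varphi=1$, as for $PO^{+,+}_1(3)$) or pairwise orthogonal (so the longest chain has length $1$ and $\varphi=2$). Over $\mathbb{F}_2$, the hyperbolic plane $O^+_2(2)$ has a single non-singular vector, giving $\varphi=2$; the anisotropic plane $O^-_2(2)$ has its three non-zero vectors pairwise non-orthogonal, a triangle, giving $\varphi=3$; and $Sp_2(2)$ behaves identically (all three non-zero vectors pairwise non-orthogonal under the symplectic form), so $\varphi=3$. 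For $O^+_4(2)$ I would list the six non-singular vectors and check that their non-commuting graph splits into two disjoint triangles (reflecting $O^+_4(2)\cong S_3\wr S_2$); a longest induced path then uses two vertices, so $m=2$ and $\varphi=3$.

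The main obstacle is precisely that these exceptional small cases fall outside the ranges of the general propositions, so none of the clean chain-building recipes apply and the conclusions hinge on the fine combinatorics of a handful of vectors. The subtle points will be getting the conventions exactly right --- which vectors are admissible (the role of $\mu$ and $\pi=Q(v)$ over $\mathbb{F}_3$, and singular versus non-singular over $\mathbb{F}_2$) --- and then correctly reading off the orthogonality graph, especially recognising the two-disjoint-triangles structure for $O^+_4(2)$ that pins $\varphi$ at $3$ rather than something larger.
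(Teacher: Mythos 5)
Your proposal is correct and takes essentially the approach the paper itself intends: the paper's entire proof is the single sentence that the results ``can be verified elementary using the technique of the proof of the above propositions,'' i.e.\ the big lemma plus a chain/orthogonality analysis of the transvection or reflection vectors, which is exactly what you spell out. Your individual claims also check out --- the non-commuting graph of the six non-singular vectors of $O_4^{+}(2)$ really is two disjoint triangles, the admissible points of $PO_2^{+,+}(3)$ and $PO_3^{-,+}(3)$ are pairwise orthogonal (and those of $PO_1^{-,+}(3)$, $PO_2^{-,+}(3)$, $O_2^{+}(2)$ are unique, while $PO_1^{+,+}(3)$ and $U_1(2)$ have none), and the Witt-index argument for $U_2(2)$, $U_3(2)$ is sound --- so your write-up is in fact more complete than the paper's.
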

\begin{proof}
These results can be verified elementary using the technique of the proof of the above propositions.
\end{proof}
\begin{proposition} Let $G$ be of type $M(22)$. Then $\varphi(G)=10$.
\end{proposition}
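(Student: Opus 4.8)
The plan is to apply the big lemma and reduce the computation of $\varphi(G)$ for $G = M(22)$ to determining the length $m$ of a longest chain of transpositions in $G$, so that $\varphi(G) = m+1$. I would then establish $m = 9$ by proving the two inequalities $m \le 9$ and $m \ge 9$ separately: the upper bound by a local (residue) argument, and the lower bound by exhibiting a single explicit chain inside a suitable subgroup.

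For the upper bound I would exploit the local structure of $M(22)$ coming from Fischer's work \cite{As96,Fi71}: if $d \in D$ is any transposition, then the transpositions commuting with $d$, i.e.\ the set $D_d = \{ e \in D : [d,e]=1,\ e \ne d \}$, generate a $3$-transposition group $(\langle D_d\rangle, D_d)$ which, together with its class, is isomorphic to $M(21) \cong U_6(2)$ (passing to the central quotient does not alter the commuting graph, hence does not alter the relevant value of $\varphi$). Now let $d_1, \dots, d_m$ be an arbitrary chain. Since $d_i$ commutes with $d_m$ for every $i \le m-2$, the transpositions $d_1, \dots, d_{m-2}$ all lie in $D_{d_m}$ and, being a contiguous subchain, form a chain of length $m-2$ in $\langle D_{d_m}\rangle$. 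By the earlier proposition for unitary groups, $\varphi(U_6(2)) = 6+2 = 8$, so by the big lemma a longest chain in the residue has length $\varphi(U_6(2)) - 1 = 7$. Hence $m-2 \le 7$, that is $m \le 9$, and therefore $\varphi(G) \le 10$.

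For the lower bound it suffices to produce one chain of length $9$. Here I would use the fact, available from Fischer's construction and the known subgroup structure of $M(22)$ \cite{As96}, that $M(22)$ contains a subgroup isomorphic to $S_{10}$ whose transpositions are members of the class $D$. The adjacent transpositions $(1\,2),(2\,3),\dots,(9\,10)$ of such an $S_{10}$ then form a chain of length $9$, so $m \ge 9$ and $\varphi(G) \ge 10$. Combining the two bounds gives $m = 9$ and $\varphi(M(22)) = 10$.

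The main obstacle is that, unlike the classical groups treated above, $M(22)$ admits no elementary vector-space model in which chains can be written down and checked by hand; both bounds therefore rest on structural input rather than direct computation. The two delicate points are (i) identifying the residue $\langle D_d\rangle$ with the $3$-transposition group $U_6(2)$ — this is precisely what pins the residue bound at $7$ rather than something larger, and hence forces $m \le 9$ — and (ii) exhibiting the symmetric subgroup $S_{10}$ with its transpositions in the correct class $D$, as opposed to some other involution class of $M(22)$. Once these structural facts are granted, the chain bookkeeping is routine and the big lemma closes the argument.
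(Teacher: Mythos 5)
Your upper bound is exactly the paper's argument: restrict a chain $x_1,\dots,x_n$ to the residue $C_{x_n}(G)/\langle x_n\rangle \cong U_6(2)$, invoke $\varphi(U_6(2))=8$ from the unitary proposition, and conclude $n-2\le 7$, hence $\varphi(G)\le 10$. That half is correct, and your parenthetical remark that passing to the central quotient does not disturb the commuting graph is a legitimate (indeed slightly more careful) version of what the paper does.

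The lower bound, however, has a genuine gap. By the big lemma, the existence of a subgroup of $M(22)$ isomorphic to $S_{10}$ whose transpositions lie in the class $D$ is \emph{equivalent} to the inequality $\varphi(M(22))\ge 10$; it is not an independent structural fact you may grant yourself, but precisely the half of the proposition that remains to be proved. Neither \cite{As96} nor \cite{Fi71} states it in citable form; the natural source would be ATLAS-type subgroup data (where $S_{10}$ appears among the maximal subgroups of $Fi_{22}$), which this paper deliberately avoids, and even that listing does not by itself tell you that the transpositions of such an $S_{10}$ fuse into $D$ rather than into some other class of involutions of $M(22)$ --- a point you yourself flag as delicate but then simply assume. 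The paper closes this gap constructively, using only local structure that genuinely is available: pick non-commuting transpositions $x_1,x_2$; since $C_{x_1,x_2}(G)\cong PO_6^{+,+}(3)$ has $\varphi=7$, it contains a chain $x_4,\dots,x_9$ of length $6$; then, identifying $C_{x_1}(G)/\langle x_1\rangle$ with $U_6(2)$ via an isomorphism $f$ with $f(x_i)=t(v_i)$, the vector $v_3=v_5+v_7+v_9$ is nonzero and singular, and $x_3=f^{-1}(t(v_3))$ extends $x_4,\dots,x_9$ to a chain of length $7$ inside $C_{x_1}(G)$; since the maximal chain in $C_{x_1,x_2}(G)$ has length $6$, the element $x_3$ cannot lie in $C_{x_1,x_2}(G)$, so $x_3$ and $x_2$ do not commute, and $x_1,x_2,\dots,x_9$ is the required chain of length $9$. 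To repair your proof you would need to supply some such construction (your $S_{10}$ formulation is then a corollary, not an ingredient); as written, your lower bound assumes the conclusion.
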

\begin{proof}
Let $x_1,...,x_n$ be a chain of transpositions of $G$. Then, $x_1,...,x_{n-2}$ is a chain of transpositions for $C_{x_n}(G)/<x_n> \cong U_6(2)$. Therefore, by the previous propositions, $n-2 \le 7$ and $n \le 9$. \newline \newline
Now, let us construct a chain of length $9$. Let us pick a pair of non-commuting transpositions $x_1,x_2$. Then, $C_{x_1,x_2}(G) \cong PO_6^{+,+}(3)$. Therefore, there exists a chain of involutions $x_4,...,x_9$ commuting with $x_1$ and $x_2$. Now, let $f:C_{x_1}(G)/<x_1> \rightarrow U_6(2)$ be an isomorphism. Then, $f(x_i)=t(v_i)$, where $v_i$ are non-zero singular vectors. Let $v_3=v_5+v_7+v_9$. Then, $v_3 \ne 0$ is singular. Denote, $x_3=f^{-1}(t(v_i))$. Note that $x_3,...,x_9$ is a chain in $C_{x_1}(G)$, but not in $C_{x_1,x_2}(G)$, because the maximal chain there has length $6$. So, $x_2$ and $x_3$ do not commute. Hence $x_1,...,x_9$ is a chain and it is maximal. Therefore, $\varphi(G)=9+1=10$.
\end{proof}
\begin{proposition} Let $G$ be of type $M(23)$. Then, $\varphi(G)=12$.
\end{proposition}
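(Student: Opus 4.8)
My plan is to mirror the preceding $M(22)$ argument, using two structural facts about $G \cong M(23)$ that are the exact analogues of those invoked there, and which are standard for the Fischer groups: for a transposition $d \in D$ one has $C_d(G)/\langle d\rangle \cong M(22)$, with the transpositions of $G$ commuting with $d$ descending onto the transpositions of $M(22)$; and for a pair of non-commuting transpositions $x_1, x_2$ one has $C_{x_1,x_2}(G) \cong PO_7^{-,+}(3)$ (an orthogonal group over $\mathbb{F}_3$ whose maximal chain, by the relevant proposition, has length $8$, since $7 \equiv 1 \pmod 3$ gives $\varphi(PO_7^{-,+}(3))=9$), again matching transpositions to transpositions. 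By the big lemma, $\varphi(G) = m+1$ where $m$ is the length of a maximal chain of transpositions, so it suffices to prove $m = 11$.

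For the upper bound I take a chain $x_1, \ldots, x_n$ and observe that $x_1, \ldots, x_{n-2}$ all commute with $x_n$ (only $x_{n-1}$ does not), so they lie in $C_{x_n}(G)$ and their images form a chain of length $n-2$ in $C_{x_n}(G)/\langle x_n\rangle \cong M(22)$. One checks routinely that these images remain distinct and that commuting is preserved, since the commutator of two transpositions of $G$ is either trivial or of order $3$, hence never equal to the involution $x_n$. By the $M(22)$ proposition and the big lemma a maximal chain of $M(22)$ has length $\varphi(M(22))-1 = 9$, so $n-2 \le 9$ and $m \le 11$.

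For the lower bound I would exhibit a chain of length $11$. Choosing non-commuting $x_1, x_2$ gives $C_{x_1,x_2}(G) \cong PO_7^{-,+}(3)$, hence a maximal chain $x_4, \ldots, x_{11}$ of length $8$ consisting of transpositions all commuting with $x_1$ and $x_2$. It then remains to insert a transposition $x_3 \in C_{x_1}(G)$ adjacent to $x_4$ and commuting with $x_5, \ldots, x_{11}$; passing to $C_{x_1}(G)/\langle x_1\rangle \cong M(22)$ this is precisely the problem of prepending one transposition to the chain $\bar x_4, \ldots, \bar x_{11}$. Once such an $x_3$ is produced, the relation $[x_2,x_3]\ne 1$ comes for free: if $x_2$ and $x_3$ commuted then $x_3, \ldots, x_{11}$ would be a chain of length $9$ inside $C_{x_1,x_2}(G) \cong PO_7^{-,+}(3)$, contradicting that its maximal chain has length $8$. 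Hence $x_1, x_2, x_3, x_4, \ldots, x_{11}$ is a chain of length $11$, and together with the upper bound this gives $m = 11$ and $\varphi(G) = 12$.

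The hard part is the construction of the bridging transposition $x_3$. In the $M(22)$ proposition the corresponding quotient $C_{x_1}(G)/\langle x_1\rangle$ was the \emph{classical} group $U_6(2)$, so the bridging transposition could be written down at once as the transvection attached to a single singular vector, its commuting relations being read off from orthogonality. Here $C_{x_1}(G)/\langle x_1\rangle \cong M(22)$ is not classical, so no such formula is available directly, and I would instead work inside a classical section of $M(22)$, namely a $U_6(2)$ realised as $C_{\bar t}(M(22))/\langle \bar t\rangle$ for a transposition $\bar t$ commuting with the relevant part of the chain, and build $\bar x_3$ there as an explicit transvection. The incidences of $\bar x_3$ that are invisible in that section would then be forced by maximal-chain-length contradictions, using $\varphi(U_6(2)) = 8$ for the relations internal to $M(22)$ and $\varphi(PO_7^{-,+}(3)) = 9$ for the relation to $x_2$. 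Arranging a single transposition $\bar x_3$ to meet all the required incidences simultaneously is where the real work lies, and it is the step I expect to be the main obstacle.
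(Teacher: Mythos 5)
Your skeleton is exactly the paper's: the upper bound comes from pushing $x_1,\ldots,x_{n-2}$ into $C_{x_n}(G)/\langle x_n\rangle \cong M(22)$ to get $n-2\le 9$, and the lower bound from a maximal chain $x_4,\ldots,x_{11}$ in $C_{x_1,x_2}(G)\cong PO_7^{+,-}(3)\cong PO_7^{-,+}(3)$ plus a bridging transposition $x_3$, with $[x_2,x_3]\ne 1$ coming for free from the chain-length bound in that orthogonal group. (Your observation that commutators of non-commuting transpositions have order $3$, so chains genuinely descend to the quotient, is a correct detail the paper leaves implicit.)

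However, the step you flag as ``the main obstacle'' is a genuine gap, and it is precisely the content of the paper's key lemma: every chain of $8$ transpositions in a group of type $M(22)$ extends by a ninth, uniquely. The paper does \emph{not} obtain this by writing down a transvection in a classical section and forcing the remaining incidences by contradiction --- as you yourself suspect, that route stalls because $C_{x_1}(G)/\langle x_1\rangle \cong M(22)$ is not classical, so the required vector cannot be exhibited and its singularity/orthogonality conditions cannot all be forced indirectly. Instead the paper proves \emph{existence} by a double count inside $M(22)$: uniqueness (the easy half, read off from the $U_6(2)$ picture of $C_{g_1}(H)/\langle g_1\rangle$, where the candidate vector is pinned down up to scalar by orthogonality against six linearly independent vectors) makes the restriction map from $9$-chains to $8$-chains injective, so it suffices to show the two sets have equal size. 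Both counts are organized through the choice of the non-commuting pair $(g_1,g_2)$ ($3510\cdot 2816$ ways), then the chain $g_9,g_8,\ldots,g_4$ built inside $C_{g_1,g_2}(H)\cong PO_6^{+,+}(3)$ with multiplicities $k_1,\ldots,k_6$ (independent of choices by Witt's lemma), and finally the count of admissible $g_3$: exactly $2$ vectors solve the relevant unitary system, of which $k_6$ commute with $g_2$. Equality of the counts reduces to $k_6=1$, verified by a short explicit computation with vectors $(120000),\ldots,(000012)$ in the $\mathbb{F}_3$-orthogonal space. Without this counting argument (or some substitute establishing that the $8$-chain in $M(22)$ actually extends), your chain of length $11$ is never constructed, so the lower bound $\varphi(G)\ge 12$ --- and hence the proposition --- remains unproven.
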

\begin{proof}
Let $x_1,...,x_n$ be a chain of transpositions of $G$. Then, $x_1,...,x_{n-2}$ is a chain of transpositions for $C_{x_n}(G)/<x_n>$, which is of type $M(22)$. Therefore, by the previous proposition, $n-2 \le 9$ and $n \le 11$. Furthermore, let us prove the following lemma.
\begin{lemma} Let $H$ be a group of $M(22)$ type and $g_1,...,g_8$ be a chain of transpositions. Then, there exists a unique transposition $g_9$ so that $g_1,...,g_9$ is a chain.
\end{lemma}
\begin{proof} First of all, as we can see from the proof of the previous part if such an element $g_9$ exists, then $g_3,...,g_9 \in C_H(g_1)$, so $g_9$ is defined uniquely. Therefore, it suffices to prove that the number of the chains of length $8$ and the number of chains of length $9$ are equal. Let $K \cong PO_6^{+,+}(3)$ and for $1 \le n \le 6$ and let $v_1,...,v_{n-1}$ form a chain. Let $k_n$ be the number of vectors $v_n$ so that $v_1,...,v_n$ is a linearly independent chain. By the main lemma, this number is independent of the choice of $v_1,...,v_n$. \newline \newline
Now, let us compute the number of $9$-chains: there are $3510$ choices for $g_1$ and then there are $2816$ choices for $g_2$. Then, $C_{g_1,g_2} \cong PO_6^{+,+}(3)$. Thus, there are $k_1$ choices for $g_9$, $k_2$ choices for $g_8$, ..., $k_6$ choices for $g_4$. Finally, from the proof of the previous part, $g_1,g_2,g_4,g_5,...,g_9$ define $g_3$ in a unique way. Thus, the number of the $9$-chains is $3510\cdot2816\cdot k_1 \cdot ... \cdot k_6$. Now, let us compute the number of $8$-chains. Again, there are $3510\cdot2816$ choices for the ordered pair $(g_1,g_2)$, $k_1$ choices for $g_8$, ..., $k_5$ choices for $g_4$. Note that $g_4, ..., g_8$ is a chain in $C_{g_1}(H)$. Let $f:C_{g_1}(H)/<g_1> \rightarrow SU_6(2)$ be an isomorphism and $f(g_i)=t(v_i)$. Then, the system of equations of $(v_3,v_4)=1$, $(v_3,v_5)=...=(v_3,v_8)=0$ has $2$ solutions (by Witt's lemma, it is sufficient to verify this argument just for one example - pick $v_4=(110000)$, $v_5=(011000)$, ..., $v_8=(000011)$ with respect to the unitary basis, then there are two such vectors $(\overline{\alpha} \alpha \alpha \alpha \alpha \alpha)$ and $(\alpha \overline{\alpha} \overline{\alpha} \overline{\alpha} \overline{\alpha} \overline{\alpha})$). Hence, there are $2$ such involutions $g_3=f^{-1}(t(v_3))$, $k_6$ of which are in $C_{g_1,g_2}(H)$. Therefore, the number of $8$-chains is $3510\cdot2816\cdot k_1 \cdot ... \cdot k_5 \cdot (2-k_6)$. Thus, it is sufficient to verify that $3510\cdot2816\cdot k_1 \cdot ... \cdot k_6 = 3510\cdot2816\cdot k_1 \cdot ... \cdot k_5 \cdot (2-k_6)$ or equivalently $k_6=1$. $k_6$ does not depend on the choice $v_1,...,v_5$ so pick $v_1=(120000)$, $v_2=(012000)$, ..., $v_5=(000012)$ with respect to the orthogonal basis. Then, if $v_6=(c_1c_2c_3c_4c_5c_6)$, then $c_1=c_2=...=c_5$. Moreover, $Q(v_6)=1$, so $c_6=0$. Hence, either $v_6=(111110)$ or $v_6=(222220)$, so $g_6=t(111110)=t(222220)$ is unique! 
\end{proof}
Now, let us construct a chain of length $11$. Let us pick a pair of non-commuting transpositions $x_1,x_2$. Then, $C_{x_1,x_2}(G) \cong PO_7^{+,-}(3)$. Therefore, there exists a chain of involutions $x_4,...,x_{11}$ commuting $x_1$ and $x_2$. By the lemma, we can continue the chain $x_{11},...,x_4$ of $C_{x_1}(G)/<x_1>$ with $x_3$. Note that $x_3,...,x_{11}$ is a chain in $C_{x_1}(G)$, but not in $C_{x_1,x_2}(G)$, because the maximal chain there has length $8$. So, $x_2$ and $x_3$ do not commute. Hence $x_1,...,x_{11}$ is a chain and it is maximal. Therefore, $\varphi(G)=11+1=12$.
\end{proof}
\begin{proposition} Let $G$ be of type $M(24)$. Then, $\varphi(G)=12$.
\end{proposition}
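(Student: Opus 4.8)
The plan is to follow the same two-step scheme as in the propositions for $M(22)$ and $M(23)$: first bound the length of a maximal chain of transpositions from above by $11$, then exhibit a chain of length $11$, so that the big lemma gives $\varphi(G)=11+1=12$. The one genuinely new feature is that the naive descent used for $M(23)$ is now too weak. Indeed, if $x_1,\dots,x_n$ is a chain then $x_1,\dots,x_{n-2}$ is a chain in $C_{x_n}(G)/\langle x_n\rangle$, which is of type $M(23)$; since a maximal chain of an $M(23)$-type group has length $\varphi(M(23))-1=11$, this only yields $n-2\le 11$, i.e. $n\le 13$. So I would instead pass to the centraliser of a non-commuting pair.

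For the upper bound I would argue as follows. Let $x_1,\dots,x_n$ be a chain, so that $x_1$ and $x_2$ do not commute. The transpositions $x_4,\dots,x_n$ each commute with both $x_1$ and $x_2$ and retain their mutual commuting relations, so they form a chain of length $n-3$ in $C_{x_1,x_2}(G)\cong PO_8^{\mu,\pi}(3)$. Because $n=8$ is even, the isomorphisms $PO_8^{\mu,-}(3)\cong PO_8^{\mu,+}(3)$ reduce the four possible types to $PO_8^{+,+}(3)$ and $PO_8^{-,+}(3)$, and for both of these the corresponding propositions give $\varphi=9$ (as $8\equiv 2\pmod 3$). Hence a maximal chain of $C_{x_1,x_2}(G)$ has length $8$ regardless of the signs $\mu,\pi$, so $n-3\le 8$ and $n\le 11$.

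For the construction I would mirror the $M(23)$ argument. Pick non-commuting $x_1,x_2$, so $C_{x_1,x_2}(G)\cong PO_8^{\mu,\pi}(3)$, and take a maximal chain $x_4,\dots,x_{11}$ of length $8$ there; all of these commute with $x_1$ and $x_2$. Passing to $C_{x_1}(G)/\langle x_1\rangle\cong M(23)$, whose maximal chain has length $11$, I would extend the chain $\overline{x_{11}},\dots,\overline{x_4}$ at the $\overline{x_4}$-end by a transposition $\overline{x_3}$ and lift it to $x_3\in C_{x_1}(G)$. Since $G$ is a $3$-transposition group, the product of two transpositions has order at most $3$, and the order of a product is a multiple of the order of its image, so the commuting relations transfer from the quotient to $C_{x_1}(G)$ without loss. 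By construction $x_3$ commutes with $x_1,x_5,\dots,x_{11}$ but not with $x_4$. If $x_3$ also commuted with $x_2$, then $x_3,x_4,\dots,x_{11}$ would be a chain of length $9$ inside $C_{x_1,x_2}(G)\cong PO_8^{\mu,\pi}(3)$, contradicting the bound of $8$ just established; hence $x_2$ and $x_3$ do not commute. Therefore $x_1,\dots,x_{11}$ is a chain, which is maximal by the upper bound, and $\varphi(G)=12$.

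The main obstacle is the extension step in $M(23)$. In the $M(23)$ proposition the analogous extension took place in an $M(22)$-type group and was a near-maximal extension ($8\to 9$ with maximal length $9$), which is why it could be pinned down uniquely by the counting lemma establishing $k_6=1$. Here, however, the extension $8\to 9$ happens well below the maximal length $11$ of $M(23)$, so uniqueness fails and that counting argument does not transfer; what is needed is only the \emph{existence} of an extension at the prescribed end. This would follow from transitivity of an $M(23)$-type group on its ordered chains of length $8$: extendability at a given end then depends only on the length, and some length-$8$ chain plainly extends there, namely one obtained by deleting an end term of a length-$9$ chain. In other words, the real work is to establish an analogue of the main lemma for the Fischer group $M(23)$. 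Since the main lemma has so far been proved only for orthogonal, symplectic and unitary spaces, I would try to recover this chain-transitivity by the same device used in the $M(22)$-lemma, namely reducing the relevant count to a geometric residue $PO_8^{\mu,\pi}(3)$ where Witt's lemma and the main lemma apply. I also rely throughout on the structural fact, from Fischer's theory, that $C_{x_1,x_2}(G)\cong PO_8^{\mu,\pi}(3)$.
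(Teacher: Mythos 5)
Your upper bound is essentially the paper's argument: pass to the centraliser of the non-commuting pair $x_1,x_2$, observe that $x_4,\dots,x_n$ is a chain of length $n-3$ there, and invoke the orthogonal propositions for dimension $8$ (the paper states this centraliser as a \emph{triple cover} of $PO_8^{+,+}(3)$ rather than the group itself, but since a chain in a central extension projects onto a chain of the quotient for a $3$-transposition group, the bound of $8$ goes through either way). So $n\le 11$ is fine.

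The genuine gap is in your lower bound, and you name it yourself: the existence of the extension $x_3$ at the prescribed end of the chain $\overline{x_{11}},\dots,\overline{x_4}$ inside $C_{x_1}(G)/\langle x_1\rangle\cong M(23)$ rests on a chain-transitivity (or extension) property of $M(23)$ that is nowhere established. The main lemma, which supplies exactly this kind of extendability, is proved only for orthogonal, symplectic and unitary spaces via Witt's lemma; the counting device of the $M(22)$ lemma was tailored to a near-maximal extension ($8\to 9$ with maximal length $9$) and, as you correctly observe, does not transfer to an extension sitting well below the maximal length $11$. So as written, the construction of a length-$11$ chain is incomplete. Moreover, the whole difficulty is avoidable: the paper's lower bound is a one-liner. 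Since $G$ of type $M(24)$ contains a subgroup $H$ of type $M(23)$ whose transpositions are transpositions of $G$, the previous proposition ($\varphi(M(23))=12$) already provides a chain of length $11$ inside $H$; commutation relations are intrinsic, so this is a chain of length $11$ in $G$, giving $\varphi(G)\ge 12$ with no new construction and no extension lemma. Combining this with your upper bound $n\le 11$ finishes the proof; you should replace your mirrored construction by this containment argument.
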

\begin{proof}
Firstly, note that $G$ has a subgroup of type $M(23)$, so $\varphi(G) \ge 12$. Furthermore, if $g_1,...,g_n$ is a chain of transpositions, then $g_4,...,g_n \in C_{g_1,g_2}(G)$, which is isomorphic to a triple cover of $PO^{+,+}_8(3)$. By the previous propositions, the maximal chain chain there has length $8$. So, $n-3 \le 8$ and $n \le 11$. So, $\varphi(G) \le 11+1=12$. Thus, $\varphi(G)=12$, as desired.
\end{proof}
\section{Geometrical interpretation of the main results}
In this section, we discuss the inverse of the problem in the previous section for the classical groups. We build representations of symmetric groups $S_n$ in the spaces with symplectic, unitary, or orthogonal forms. From the previous part, one may easily deduce, that if $G$ is a symplectic, unitary or orthogonal group of dimension $n$, then $n \le \varphi(G) \le n+2$. Therefore, we anticipate that $S_n$ should act on a space of dimension $m$, with $n-2 \le m \le n$. For that purpose, we build a symplectic, unitary, or orthogonal space $V$ with basis $v_1,...,v_n$ and allow $S_n$ to permute the basis. Then, we take the subspace, which is orthogonal to some vector, and, if possible, factorize it over some vector. If needed, for fixing the sign of an orthogonal space, we take a direct sum with a one-dimensional space. \newline \newline
In this section we look for a representation of $S_n$ with $n \ge 5$. Actually, in this case, the action of $S_n$ on the corresponding space is faithful, because the kernel of the action is a normal subgroup of $S_n$, that is $\{ 1 \}, A_n$ or $S_n$. And the image of the representation has more than two elements, so the kernel is the trivial subgroup. 
\subsection{Symplectic spaces over the 2-element field} 
Let $n$ be an even integer. Suppose that $W$ is an $n$-dimensional symplectic space over the $2$-element field. Let $w_1 ,..., w_n$ be a basis for $W$ with $(w_i, w_j)=1$, if $i \ne j$. Let $W' =\{x \in W : (x, w_1+...+w_n)=0\}$. Then, $W'$ is a degenerate space : $w_1+...+w_n \in W^{'}$. Thus, if we factor $W^{'}$ by the subspace generated by the vector $w_1+...+w_n$ we obtain the space $V=W^{'}/<w_1+...+w_n>$ which is a symplectic space of dimension $n-2$. Let $\tau \in S_n$. Then, let $\psi(\tau):W\rightarrow W$ be a linear operator for which $\psi(\tau)(w_i)=w_{\tau(i)}$. Furthermore, one may restrict $\psi(\tau)$ on $V$. Note that $\psi(\tau)$ preserves the symplectic form on $V$, so $\psi$ is the natural embedding of $S_n$ into $Sp_{n-2}(2)$. Finally, $\psi(i j)(w_i)=w_j$, $\psi(i j)(w_j)=w_i$, and $\psi(i j)(w_k)=w_k$ if $k \ne i,j$ in $W$. Note that the transvention $t(w_i+w_j)$ defined on $W$ acts in the same way on the basis $(v_1,...,v_n)$. On the induced space $V$, $t(w_i+w_j)$ is also a transvection, as $w_i+w_j \in W'$. Thus, $\psi(i j)=t(w_i+w_j)$ on $V$. Therefore, we verified that the images of transpositions under $\tau$ are transvenctions of $V$. 
\subsection{Unitary spaces over the 4-element field} 
Let $n$ be an even integer. Suppose that $W$ is an $n$-dimensional unitary space over the $4$-element field. Let $w_1 ,..., w_n$ be a basis for $W$ with $(w_i,w_i)=1$ and $(w_i, w_j)=0$, if $i \ne j$. Let $W' =\{x \in W : (x, w_1+...+w_n)=0\}$. Then, $W'$ is a degenerate space : $w_1+...+w_n \in W^{'}$. Thus, if we factor $W^{'}$ by the subspace generated by the vector $w_1+...+w_n$ we obtain the space $V=W^{'}/<w_1+...+w_n>$ which is a unitary space of dimension $n-2$. Let $\tau \in S_n$. Then, let $\psi(\tau):W\rightarrow W$ be a linear operator for which $\psi(\tau)(w_i)=w_{\tau(i)}$. Furthermore, one may restrict $\psi(\tau)$ on $V$. Note that $\psi(\tau)$ preserves the unitary form on $V$, so $\psi$ is the natural embedding of  $S_n$ into $U_{n-2}(2)$. Finally, $\psi(i j)(w_i)=w_j$, $\psi(i j)(w_j)=w_i$, and $\psi(i j)(w_k)=w_k$ if $k \ne i,j$ in $W$. Note that the transvention $t(w_i+w_j)$ defined on $W$ acts in the same way on the basis $(w_1,...,w_n)$. On the induced space $V$, $t(w_i+w_j)$ is also a transvection, as $w_i+w_j \in W'$. Thus, $\psi(i j)=t(w_i+w_j)$ on $V$ and $w_i+w_j$ is singular. Therefore, we verified that the images of transpositions under $\tau$ are transvenctions of $V$.
\subsection{Orthogonal spaces over the $3$-element field} 
Let $n$ be an integer congruent to $0$ or $1$ modulo $3$. Suppose that $W$ is an $(n+1)$-dimensional orthogonal space over the $3$-element field of type "-". Let $w_1 ,..., w_n,w_{n+1}$ be a basis for $W$ with $Q(w_i)=-1$, $(w_i,w_i)=1$, for $i=\overline{1,...,n}$, $Q(w_{n+1})=1$, $(w_{n+1},w_{n+1})=-1$  and $(w_i, w_j)=0$, if $i \ne j$. Let $v=w_1+...+w_n$ if $n \equiv 0$ (mod $3$) and $v=w_1+...w_{n+1}$ if $n \equiv 1$ (mod $3$). Let $W' =\{x \in W : (x, v)=0\}$. Then, $W'$ is a degenerate space : $v \in W^{'}$. Thus, if we factor $W^{'}$ by the subspace generated by the vector $v$, we obtain the space $V=W^{'}/<v>$ which is a unitary space of dimension $n-1$. Let $\tau \in S_n$. Then, let $\psi(\tau):W\rightarrow W$ be a linear operator for which $\psi(\tau)(w_i)=w_{\tau(i)}$. Furthermore, one may restrict $\psi(\tau)$ on $V$. Note that $\psi(\tau)$ preserves the orthogonal form on $V$. \newline \newline 
If, $n \equiv 1$ (mod $3$), then the images of the vectors $w_1-w_2, w_1-w_3, ..., w_1-w_n$ form a basis for $V$. Let us construct the Gram matrix: note that if $i\neq j$, then $(w_1-w_{i}, w_1-w_{j})=-1$ and $(w_1-w_{i}, w_1-w_{j})=1$. 
So, the Gram matrix is of form 
\[
A = 
 \begin{pmatrix}
  -1 & 1 & \cdots & 1 \\
  1 & -1 & \cdots & 1 \\
  \vdots  & \vdots  & \ddots & \vdots  \\
  1 & 1 & \cdots & -1
 \end{pmatrix}
\]
Note that, $(A-I)^2=0$, So $A-I$ has all eigenvalues $0$. Hence, all the eigenvalues of $A$ are $1$ and $det(A)=1$. So, $V$ is of type $"+"$. \newline \newline
Furthermore, $n \equiv 0$ (mod $3$), then the images of the vectors $w_1-w_2, w_1-w_3, ..., w_1-w_{n-1},w_{n+1}$ form a basis for $V$. Let us construct the Gram matrix: note that if $i\neq j$, then $(w_1-w_{i}, w_1-w_{j})=-1$ and $(w_1-w_{i}, w_1-w_{j})=1$. Also, $(w_{n+1},w_{n+1})=-1$ and $w_{n+1}$ is orthogonal to all other vectors.
So, the Gram matrix is of form 
\[
A = 
 \begin{pmatrix}
  -1 & 1 & \cdots & 1 & 0 \\
  1 & -1 & \cdots & 1  & 0\\
  \vdots  & \vdots  & \ddots & \vdots  \\
  1 & 1 & \cdots & -1 & 0 \\
  0 & 0 & \cdots & 0 & -1
 \end{pmatrix}
\]
Note that, $rank(A-I)=2$, So $A-I$ has all eigenvalues, but $2$ eigenvalues equal $0$. Furthermore, it has $2$ vectors corresponding to the eigenvalue $1$. So, $Sp(A-I)=\{0,0,...,0,1,1\}$. Thus, $Sp(A)=\{1,1,...,1,-1,-1\}$ and $det(A)=1$. Therefore, $V$ is of type $"+"$. \newline \newline
Finally, if $(i j) \in S_n$, then $\psi(i j)=t(w_i-w_j)$ and $Q(w_i-w_j)=Q(w_i)+Q(w_j)=1$. Thus, $\psi$ is the natural embedding of $S_n$ into $O^{+,+}_{n-1}(3)$. \newline \newline
Let $n$ be a multiple of $3$. Suppose that $W$ is an $n$-dimensional orthogonal space over the $3$-element field of type "+". Let $w_1 ,..., w_n$ be a basis for $W$ with $Q(w_i)=-1$, $(w_i,w_i)=1$, and $(w_i, w_j)=0$, if $i \ne j$. Let $W' =\{x \in W : (x, w_1+...+w_n)=0\}$. Then, $W'$ is a degenerate space : $w_1+...+w_n \in W^{'}$. Thus, if we factor $W^{'}$ by the subspace generated by the vector $w_1+...+w_n$, we obtain the space $V=W^{'}/<w_1+...+w_n>$ which is an orthogonal space of dimension $n-2$. Let $\tau \in S_n$. Then, let $\psi(\tau):W\rightarrow W$ be a linear operator for which $\psi(\tau)(w_i)=w_{\tau(i)}$. Furthermore, one may restrict $\psi(\tau)$ on $V$. Note that if $(i j) \in S_n$, then $\psi(i j)=t(v_i-v_j)$ and $Q(v_i-v_j)=1$. Moreover, calculations, similar to the calculations carried out in the previous subcase show that the space $V$ is of type "-". Thus, $\psi$ is the natural embedding of $S_n$ into $PO^{-,+}_{n-2}(3)$. 
\subsection{Orthogonal spaces over the 2-element field} 
Let $n$ be a multiple of $4$. Let $W$ be a symplectic space over the $2$-element field with basis $w_1,...,w_n$ such that $(w_i,w_j)=1$ if $i \ne j$. Equip $W$ with an orthogonal form such that $Q(w_1)=...=Q(w_n)=0$. Let $W' =\{x \in W : (x, w_1+...+w_n)=0\}$. Then, $W'$ is a degenerate space : $w_1+...+w_n \in W^{'}$. Moreover, $Q(w_1+w_2+...+w_n)=\frac{n(n-1)}{2} = 0 \in \mathbb{Z}_2$. Thus, if we can factor $W^{'}$ by the subspace generated by the vector $w_1+...+w_n$ and obtain the space $V=W^{'}/<w_1+...+w_n>$ which is an orthogonal space of dimension $n-2$. Let $\tau \in S_n$. Then, let $\psi(\tau):W\rightarrow W$ be a linear operator for which $\psi(\tau)(w_i)=w_{\tau(i)}$. Furthermore, one may restrict $\psi(\tau)$ on $V$. Note that if $(i j) \in S_n$, then $\psi(i j)=t(v_i+v_j)$ and $Q(v_i+v_j)=1$. Thus, $\psi$ is the natural embedding of $S_n$ into $O^{\epsilon}_{n-2}(2)$ Also, from the previous section, one may easily deduce, that $\epsilon=-$, if $n \equiv 4$ (mod $8$) and the subspace is of type $\epsilon=+$ if $8|n$. \newline \newline
Let $n$ be an integer congruent to $2$ modulo $4$. Let $W$ be a symplectic space over the $2$-element field with basis $w_1,...,w_n$ such that $(w_i,w_j)=1$ if $i \ne j$. Equip $W$ with an orthogonal form such that $Q(w_1)=...=Q(w_n)=\epsilon$. Note that $W=<w_1,w_2> \oplus <w_1+w_2+w_3,w_1+w_2+w_4> \oplus ... \oplus <w_1+w_2+...+w_{n-2}+w_{n-1},w_1+w_2+...+w_{n-2}+w_{n}>$. Moreover, types of the spaces alternate. Thus, the spaces with $\epsilon=0$ and $\epsilon=1$ are not isomorphic. Furthermore, $S_n$ acts coordinate-wise on both spaces, and a transposition $(i j)$ on $S_n$ has the same action as the transvenction $t(v_i+v_j)$. Therefore, we described a faithful action of $S_n$ on both $O_n^{+}(2)$ and $O_n^{-}(2)$. \newline \newline
Let $n$ be an integer congruent to $1$ modulo $4$. Suppose that $W$ is an orthogonal space of dimension $(n+1)$. Let $v_1,...,v_{n+1}$ be a basis for $W$ with $(v_i, v_j)=1$ if $v_i \neq v_j$ $Q(v_1)=...=Q(v_{n+1})=0$. The choice of $Q(v_{n+2})$ depends on the type of the orthogonal space.  Note that $Q(v_1+...v_{n+1})=\frac{(n+1)n}{4}=0 \in Z_2$. Thus, if $W^{'}=\{w_1+w_2+...+w_{n+1}\}$ and $V=W^{'}/<w_1+...+w_{n+1}>$, then $V$ is an orthogonal space, which inherits the dot product and quadratic form from $W$. Let $\tau \in S_n$. Then, let $\psi(\tau):W\rightarrow W$ be a linear operator for which $\psi(\tau)(w_i)=w_{\tau(i)}$, for $i=\overline{1,...,n+1}$. Moreover, one may restrict $\psi(\tau)$ on $V$. Note that $\psi(\tau)$ preserves the orthogonal form, and for $i\ne j \in \{1,...,n\}$ $\psi(i j)=t(v_i+v_j)$. Thus, $\psi$ is the natural embedding of $S_n$ into $O_{n-1}^{\epsilon}(2)$.
\section{Conclusions} To conclude the results obtained in the previous sections, we have computed the following values for the groups in Fischer's list.
\begin{theorem} The maximal symmetric subgroups of the groups from the Fischer's group have the following sizes:
\begin{itemize}
    \item $\varphi(S_n)=n$;
    \item $\varphi(Sp_2(2))=3$, $\varphi(Sp_n(2))=n+2$, if $n \ge 4$ is even;
    \item $\varphi(U_n(2))=n$, if $n \le 3$, $\varphi(U_n(2))=n+1$, if $n \ge 4$ is odd, $\varphi(U_n(2))=n+2$, if $n \ge 4$ is even;
    \item $\varphi({PO_n}^{+,+}(3))=n$, if $n=1$ (mod $3$), $n+1$, otherwise;
    \item $\varphi({PO_n}^{-,+}(3))=n+2$, if $n=1$ (mod $3$), $n+1$ if $n=0$ (mod $3$), $n$, if $n=2$ (mod $3$);
    \item $\varphi({O_4}^{+}(2))=3$, $\varphi({O_n}^{+}(2))=n+1$, if $n=0$ (mod $4$), $\varphi({O_n}^{+}(2))=n$, if $n=2$ (mod $8$) and $\varphi({O_n}^{+}(2))=n+2$, if $n=6$ (mod $8$);
    \item $\varphi({O_2}^{-}(2))=3$, $\varphi({O_n}^{+}(2))=n+1$, if $n=0$ (mod $4$), $\varphi({O_n}^{+}(2))=n$, if $n=6$ (mod $8$) and $\varphi({O_n}^{+}(2))=n+2$, if $n=2$ (mod $8$);
    \item $\varphi(M(22))=10$, $\varphi(M(23))=12$, $\varphi(M(24))=12$.
\end{itemize}
\end{theorem}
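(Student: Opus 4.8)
The plan is to assemble this theorem directly from the propositions of the ``Main results'' section, since each displayed line is exactly the conclusion of one of those propositions, supplemented by the small-rank computations and the isomorphisms recorded in the ``Notation'' section. I would proceed family by family, matching each item to its source and invoking the listed isomorphisms to absorb the cases not treated explicitly.

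First I would dispose of the generic ranges. The line $\varphi(S_n) = n$ is the first proposition. For the symplectic groups, the proposition $\varphi(Sp_{2n}(2)) = 2n+2$ (valid for $n \geq 2$) yields $\varphi(Sp_m(2)) = m+2$ for all even $m \geq 4$, while the small-groups proposition supplies $\varphi(Sp_2(2)) = 3$. The unitary line combines the proposition computing $\varphi(U_n(2))$ for $n \geq 4$, split on the parity of $n$, with the tabulated values $\varphi(U_1(2)) = 1$ and $\varphi(U_2(2)) = \varphi(U_3(2)) = 3$. The two $\mathbb{F}_3$-orthogonal families are read off from the propositions for $PO_n^{+,+}(3)$ and $PO_n^{-,+}(3)$ together with their low-rank entries; here I would also cite $PO_n^{\mu,\pi}(3) \cong PO_n^{\mu,-\pi}(3)$ for $n$ even and $PO_n^{\mu,\pi}(3) \cong PO_n^{-\mu,-\pi}(3)$ for $n$ odd to confirm that the remaining sign choices need no separate treatment.

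Next I would handle the $\mathbb{F}_2$-orthogonal groups: the propositions for $O_n^{+}(2)$ and $O_n^{-}(2)$ give the three residue classes modulo $8$, and the isomorphism $O_n^{+}(2) \cong O_n^{-}(2) \cong Sp_{n-1}(2)$ for odd $n$ shows that only the even ranks demand the explicit chain constructions. Finally, the three dedicated propositions on $M(22)$, $M(23)$, $M(24)$ produce the values $10$, $12$, $12$, completing the list.

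The main obstacle is not the aggregation itself but guaranteeing internal consistency: that each boundary value genuinely agrees with the generic formula, and that the inductive centralizer arguments for the sporadic groups are correctly anchored in the earlier classical computations. The bounds for the Fischer groups pass through centralizers $C_{x_1,x_2}(G)$ and quotients $C_{x}(G)/\langle x\rangle$ that are classical groups — among them $U_6(2)$, $PO_6^{+,+}(3)$, $PO_7^{+,-}(3)$ and a triple cover of $PO_8^{+,+}(3)$ — as well as $M(22)$ itself, so I would verify that each of these ranks lies in the range where the governing proposition applies before reading off its chain length. Once these checks are in place, the theorem follows by collecting the individual results.
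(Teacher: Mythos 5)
Your plan is exactly the paper's: the paper offers no independent argument for this theorem, presenting it purely as the aggregation of the propositions of the ``Main results'' section, the small-groups table, and the isomorphisms recorded in the Notation section; your family-by-family matching (including the reduction of odd-dimensional $O_n^{\pm}(2)$ to $Sp_{n-1}(2)$ and the sign-change isomorphisms over the $3$-element field) is the same bookkeeping. Your check that the Fischer-group centralizers $U_6(2)$, $PO_6^{+,+}(3)$, $PO_7^{+,-}(3)$ and the triple cover of $PO_8^{+,+}(3)$ lie in the range of the governing propositions is also sound.

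However, the internal-consistency verification you rightly insist on does not pass, so your final step ``the theorem follows by collecting the individual results'' fails for the statement exactly as written. Concretely: (i) the $PO_n^{-,+}(3)$ line of the theorem interchanges two residue classes relative to the proposition it is supposed to summarize --- the proposition (whose chain constructions are correct: a chain of length $n$ exists when $n \equiv 2 \pmod 3$, and the maximal chain has length $n-1$ when $n \equiv 0 \pmod 3$) gives $\varphi = n+1$ for $n \equiv 2 \pmod 3$ and $\varphi = n$ for $n \equiv 0 \pmod 3$, whereas the theorem asserts $n$ and $n+1$ respectively; (ii) several tabulated small-rank values contradict the theorem's formulas: the table gives $\varphi(U_2(2)) = 3$ while the clause ``$\varphi(U_n(2)) = n$ if $n \le 3$'' yields $2$; the table gives $\varphi(PO_2^{+,+}(3)) = 2$ while the $PO^{+,+}$ clause yields $3$; and $\varphi(PO_n^{-,+}(3)) = 2$ for $n \le 3$ fits neither reading of that line. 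So an honest execution of your plan exposes that the theorem cannot literally be assembled from the propositions: one must either restate it (aligning the $PO^{-,+}$ residues with the proposition and adding the small-rank exceptions) or revisit the propositions themselves. As a proof of the statement as given, your proposal --- like the paper's own summary --- stops short at precisely this point.
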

In addition, in the previous section, for each classical group $G$ we have built a faithful representation of $S_{\varphi(G)}$ into the corresponding space of $G$, or into a smaller space, which is a direct geometrical interpretation of the main results. \newline \newline
The theorem \ref{main} is a direct corollary of this theorem. The groups listed in the theorem \ref{main} are candidates to admit standard Majorana representations. Note that if for such a group $(G,D)$ there exists a monomorphism $\psi:G \rightarrow M$, such that $\psi(D) \subseteq 2A$, then there is also a Majorana representation of $G$ - it suffices to take the subalgebra of the algebra $V_M$ generated by the axes corresponding to the involutions from $\psi(D)$. Therefore, we plan to find out the groups in this list, which admit a standard Majorana representation, and among these groups find these which can be embedded in the Monster.
\section{Acknowledgments}
I would like to thank my supervisor, A.A. Ivanov, for introducing me to the area of research and for his useful advice.

\end{document}